\documentclass[11pt]{amsart}
\usepackage{amsmath,amssymb,amsfonts,enumerate,tikz,url,booktabs,centernot}
\usepackage[group-minimum-digits=4,group-separator=\text{,}]{siunitx}

\textwidth=6in
\textheight=9in
\evensidemargin=0.75in
\oddsidemargin=0.75in
\topmargin=-0.25in
\headheight=0in
\headsep=.5in
\hoffset=-0.5in

\newcommand{\Q}{\mathbb{Q}}
\newcommand{\Z}{\mathbb{Z}}

\newcommand{\F}{\mathbb{F}}
\newcommand{\Fp}{\F_p}
\DeclareMathOperator{\M}{\mathsf{M}}

\newcommand{\SSS}{\mathcal{S}}

\newcommand{\USp}{\mathrm{USp}}
\newcommand{\kron}[2]{\left(\frac{#1}{#2}\right)}
\newcommand{\inkron}[2]{(#1/#2)}
\newcommand{\pnum}[1]{\num{#1}\phantom{.0}}

\newtheorem{theorem}{Theorem}[section]
\newtheorem{lemma}[theorem]{Lemma}

\theoremstyle{definition}

\newtheorem{example}[theorem]{Example}
\newtheorem{remark}[theorem]{Remark}

\title[Computing Hasse--Witt matrices]
 {Computing Hasse--Witt matrices of hyperelliptic curves in average polynomial time, II}
 
\author{David Harvey and Andrew V. Sutherland}

\begin{document}

\begin{abstract}
We present an algorithm that computes the Hasse--Witt matrix of given hyperelliptic curve over $\Q$ at all primes of good reduction up to a given bound $N$.
It is simpler and faster than the previous algorithm developed by the authors.
\end{abstract}
 
 \maketitle

\section{Introduction}
\label{sec:intro}

Let $C/\Q$ be a (smooth projective) hyperelliptic curve of genus $g$ defined by an affine equation of the form $y^2=f(x)$,
with $f=\sum_i f_i x^i \in\Z[x]$ squarefree.
Primes $p$ for which the reduced equation $y^2= f(x)\bmod p$ defines a hyperelliptic curve $C_p/\Fp$ of genus $g$ are \emph{primes of good reduction} (for the equation $y^2=f(x)$).
For each such prime $p$, the \emph{Hasse--Witt matrix} (or \emph{Cartier--Manin matrix}) of $C_p$ is the $g\times g$ matrix $W_p=[w_{ij}]$ over $\Z/p\Z$ with entries
\[
w_{ij} = f^{(p-1)/2}_{pi-j}\bmod p\qquad (1\le i,j\le g),
\]
where $f^n_k$ denotes the coefficient of $x^k$ in $f(x)^n$; see \cite{Manin:HasseWittMatrix,Yui:HasseWittMatrix} for details.
The matrix~$W_p$ depends on the equation $y^2=f(x)\bmod p$ for the curve $C_p$, but its conjugacy class, and in particular, its characteristic polynomial, is an invariant of the function field of $C_p$.

The Hasse--Witt matrix $W_p$ is closely related to the \emph{zeta function}
\begin{equation}\label{eq:zeta}
Z_p(T):=\exp\left(\sum_{k = 1}^\infty\frac{\#C_p(\F_{p^k})}{k}T^k\right)=\frac{L_p(T)}{(1-T)(1-pT)}.
\end{equation}
Indeed, the numerator $L_p\in\Z[T]$ satisfies
\[
L_p(T)\equiv \det(I-TW_p)\pmod p,
\]
and we also have
\[
\chi_p(T)\equiv (-1)^gT^g\det(W_p-TI)\pmod p,
\]
where $\chi_p(T)$ denotes the characteristic polynomial of the Frobenius endomorphism of the Jacobian of $C_p$.
In particular, the trace of $W_p$ is equal to the trace of Frobenius modulo $p$, and for $p> 16g^2$ the Riemann Hypothesis for curves implies that this relationship uniquely determines the trace of Frobenius.

In this paper we present an algorithm that takes as input the polynomial $f(x)$ and an integer $N>1$, and simultaneously computes $W_p$ for all primes $p\le N$ of good reduction.
Our main result is the complexity bound given in Theorem~\ref{thm:main} below; the details of the algorithm are given in \S \ref{sec:algs}.

All time complexity bounds refer to bit complexity. We denote by $\M(s)$ the time needed to multiply $s$-bit integers; we may take $\M(s) = O(s \, (\log s)^{1+o(1)})$ (see \cite{SS:IntegerMultiplication, Furer2009}, or \cite{HLvdH-zmult} for recent improvements).
We assume throughout that $\M(s)/(s \log s)$ is increasing, and that the space complexity of $s$-bit integer multiplication is $O(s)$.
\begin{theorem}\label{thm:main}
Assume that $g \log g = O(\log N)$ and that $\log\max_i |f_i| = O(\log N)$.
The algorithm \textsc{ComputeHasseWittMatrices} computes $W_p$ for all primes $p \leq N$ of good reduction in $O(g^3 \M(N\log N)\log N)$ time and $O(g^2 N)$ space.
\end{theorem}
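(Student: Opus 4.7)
The plan is to reduce the computation of the Hasse--Witt matrices to long products of small integer matrices modulo each prime $p$, and handle all primes $p \le N$ simultaneously via an accumulating remainder tree.

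For the reduction, I would seek a sequence of integer matrices $A_1, A_2, \ldots$ of size $O(g) \times O(g)$ with entries of bit-size $O(\log N)$, depending on $n$ but not on $p$, such that for every prime $p$ of good reduction up to $N$, the product $A_{(p-1)/2} \cdots A_1 \bmod p$ encodes all entries of $W_p$. Such a recurrence exists because the coefficients $f^n_k$ of $f(x)^n$ satisfy a linear recurrence in $n$ with polynomial-in-$n$ coefficients (they are D-finite); clearing denominators packages the recurrence into integer matrices $A_n$. To recover all entries $f^{(p-1)/2}_{pi-j}$ for $1 \le i, j \le g$ from a single product, one arranges the state vector to track a window of $O(g)$ consecutive coefficients of $f^n$ and unrolls the $g$ columns of $W_p$ via a block structure still of dimension $O(g)$.

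Given the matrix recurrence, I would apply an accumulating remainder tree: build a balanced binary tree over $[1, (N-1)/2]$ whose internal nodes store the matrix products over their intervals, computed bottom-up. For each prime $p$, the index $(p-1)/2$ lies at a specific node boundary, where we reduce the cumulative product modulo $p$ to obtain $W_p$. The accumulating variant processes subtrees sequentially, keeping peak memory to $O(g^2 N)$ bits, as required by the space bound.

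For the complexity, a Hadamard-type bound shows that the product of $\ell$ consecutive $A_i$'s has entries of bit-size $O(\ell \log N)$. At depth $d$ of the tree there are $O(2^d)$ nodes with intervals of length $\Theta(N/2^d)$; schoolbook matrix multiplication at each such node costs $O(g^3 \, \M((N/2^d) \log N))$ bit operations. Summing over nodes at a fixed level gives $O(g^3 \, \M(N \log N))$ by the hypothesis that $\M(s)/(s \log s)$ is increasing, and summing over $O(\log N)$ levels yields the claimed time bound $O(g^3 \, \M(N\log N) \log N)$. The main technical obstacle is the first step: exhibiting a matrix recurrence of dimension exactly $O(g)$ and length exactly $\Theta(p)$, with integer entries of bit-size $O(\log N)$ after careful clearing of denominators from the $n$-polynomial coefficients, whose products modulo $p$ yield \emph{all} $g^2$ entries of $W_p$ in a single pass rather than requiring $g$ separate instances (which would inflate the running time by an extra factor of $g$).
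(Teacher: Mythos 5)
There are two genuine gaps, and together they sit exactly where the paper's real content lies. First, the matrices you get by ``clearing denominators'' from the recurrence for the coefficients of $f^n$ are \emph{not} independent of $p$: the recurrence for $h^n_k$ has coefficients $((n+1)j-k)h_j$ with $n=(p-1)/2$, so the resulting matrices $M^n_k$ depend on $p$, and a product tree shared across all primes cannot be built from them. The paper's fix is the observation that after reduction modulo $p=2n+1$ one has $2(n+1)\equiv 1$, so $2M^n_k\equiv M_k\pmod p$ with $M_k$ a genuinely $p$-independent integer matrix (equation \eqref{eq:matrix2}); your proposal never supplies this step, and without it the remainder-tree strategy does not get off the ground. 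Second, your claimed ``block structure of dimension $O(g)$ yielding all $g^2$ entries in a single pass of length $\Theta(p)$'' is precisely what fails. The entry $w_{ij}=f^{(p-1)/2}_{pi-j}\bmod p$ for row $i$ sits at coefficient index about $ip$, so a single pass would have to run to length $\sim gp$, and along the way the recurrence divides by $kh_0$ with $k=p,2p,\dots$; equivalently, the denominator $(pi-cn-1)!$ in \eqref{eq:vnm-spec} is divisible by $p^{i-1}$, so rows $i\ge 2$ cannot be computed modulo $p$ at all. The only known workaround in that direction is to lift to $\Z/p^{\lambda}\Z$ with $\lambda\ge g$ (this is the Bostan--Gaudry--Schost device and the authors' earlier algorithm), which inflates the entry sizes by a factor of $g$ and destroys the claimed bound.

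The paper avoids this not by a cleverer recurrence but by a different reduction: it computes only the \emph{first} row $W_p^1(a_i)$ of the Hasse--Witt matrices of the $g$ translated curves $y^2=f(x+a_i)$, which needs products only up to index $en\le p-1$ and hence works modulo $p$ throughout, and then recovers $W_p$ from the transformation law $W_p(a)=T(a)W_pT(-a)$ by solving Vandermonde systems column by column (Lemma~\ref{lemma:WpFromFirstRows}). Note also that the extra factor of $g$ you were trying to avoid is not actually avoided in your own accounting: you charge $O(g^3\M(\cdot))$ per node via schoolbook multiplication of $O(g)\times O(g)$ matrices, whereas the paper charges $O(g^2\M(\cdot))$ per call (Lemma~\ref{lem:zmatmult}, reusing transforms) times $g$ calls, arriving at the same $g^3$ --- so even if your single-pass recurrence existed, it would buy nothing over the paper's route, and as argued it does not exist modulo $p$. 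Finally, a complete proof must also deal with the primes your scheme ignores: primes $p<g$, primes dividing $h_0$ or $f(a_i)$ or some $a_i-a_j$ (inadmissible primes), which the paper handles separately with the $O(g^2p\M(\log p))$ single-prime algorithm and a count $|\SSS|=O(g\log N)$ to keep their total cost within the stated bound.
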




The average running time of \textsc{ComputeHasseWittMatrices} per prime $p\le N$ is
\[
O(g^3(\log p)^{4+o(1)}),
\]
which is polynomial in $g \log p$, the bit-size of the equation defining $C_p$.
While it is known that one can compute the characteristic polynomial of $W_p$ for any particular prime $p$ in time polynomial in $\log p$ (the Schoof--Pila algorithm \cite{Schoof:Polytime,Pila:Polytime}), or polynomial in~$g$ (Kedlaya's algorithm \cite{kedlaya:algorithm}, for example), we are aware of no algorithm that can compute even the trace of $W_p$ in time that is polynomial in $g\log p$.

The algorithm presented here improves the previous algorithm given by the authors in~\cite{HS:HyperellipticHasseWitt},
which was in turn based on the approach introduced in \cite{Harvey:HyperellipticPolytime}.
The new algorithm is easier to describe and implement, and it is significantly faster.
Asymptotically we gain a factor of~$g^2$ in the running time:
one factor of $g$ arises from genuine algorithmic improvements in the present paper, and another factor of $g$ follows from unrelated recent work on the theoretical complexity of integer matrix multiplication \cite{HvdH:zmatmult}.
The new algorithm also uses less memory by a significant constant factor.
Tables~\ref{table:hw1vshw2g2} and ~\ref{table:hw1vshw2g3} in \S\ref{sec:perf} give performance comparisons in genus 2 and 3, where the the new algorithm is already up to 8 times faster.
Compared to previous methods for solving this problem (i.e., prior to \cite{Harvey:HyperellipticPolytime}), the new algorithm is dramatically faster, with more than a 300-fold speed advantage for $N=2^{30}$; see Table~\ref{table:sjhfcmp}.
Performance results for hyperelliptic curves of genus $g\le 10$ can be found in Table~\ref{table:highergenus}.

In addition to the average polynomial-time algorithm, we give an $O(g^2p \M(\log p))$ algorithm to compute $W_p$ for a single prime $p \geq g$.
While the dependence on~$p$ is not asymptotically competitive with existing algorithms, it is easy to implement, and uses very little memory.
The small constant factors in its complexity make it a good choice for small to medium values of $p$; see Table~\ref{table:modp}.

We also introduce techniques that may be of interest beyond the scope of our algorithmic applications.
In particular, we show that the matrix $W_p$ for a given curve may be derived from knowledge of just the \emph{first} row of the matrices $W_p$ corresponding to $g$ isomorphic curves.
Algorithmically, this has the advantage that we never need to go beyond the coefficient of $x^{p-1}$ in the expansion of $f(x)^{((p-1)/2)}$ in order to compute $W_p$.

\section{Recurrence relations}
\label{sec:RR}

As above, let $C/\Q$ be a hyperelliptic curve of genus $g$.
We may assume without loss of generality that $C$ is defined by an equation of the form
\[
y^2=f(x)=\sum_{i=c}^d f_ix^i\qquad(f_i\in\Z)
\]
with $c\in\{0,1\}$, $d\in \{2g+1,2g+2\}$ and $f_cf_d\ne 0$ (we have $c\le 1$ because $f$ is squarefree).
It is convenient to normalize by taking $h(x) := f(x)/x^c$ and $r := d - c$.
Then
\[
h(x)=\sum_{i=0}^r h_i x^i
\]
where $h_i = f_{i+c}$ for $i = 0, \ldots, r$, and $h_0 h_r \neq 0$.

We now derive a recurrence for the coefficients $h^n_k$ of $h^n$, following the strategy of Bostan--Gaudry--Schost \cite[\S8]{BGS-recurrences}.
The identities
\[
h^{n+1} = h \cdot h^n\qquad\text{and}\qquad(h^{n+1})' = (n+1)h'\cdot h^n
\]
yield the relations
\[
h_k^{n+1}=\sum_{j=0}^r h_j h_{k-j}^n\qquad\text{and}\qquad k h_k^{n+1}=(n+1)\sum_{j=0}^r j h_j h_{k-j}^n.
\]
Subtracting $k$ times the first relation from the second and solving for $h^n_k$ yields the recurrence
\begin{equation}\label{eq:up}
h^n_k = \frac{1}{k h_0} \sum_{j=1}^r ((n+1) j - k) h_j h^n_{k-j},
\end{equation}
which expresses $h^n_k$ in terms of the previous $r$ coefficients $h^n_{k-1}, \ldots, h^n_{k-r}$, for all $k > 0$.

The recurrence may be written in matrix form as follows. Define the integer row vector
\[
v_k^n := [h^n_{k-r+1},\ldots,h^n_k] \in \Z^r.
\]
Then
\[
v^n_k = \frac1{k h_0} v^n_{k-1}M^n_k,
\]
where
\begin{equation}
\label{eq:matrix}
M^n_k := \begin{bmatrix}
0 &   \cdots &  0 & ((n+1)r-k)h_r\\
k h_0 &  \cdots  & 0 & ((n+1)(r-1)-k)h_{r-1}\\
\vdots & \ddots & \vdots &\vdots\\
0 &  \cdots  & k h_0 & (n+1-k)h_1
\end{bmatrix}.
\end{equation}
Iterating the recurrence, we obtain an explicit formula for the $m$th term, for any $m \geq 0$:
\[
v^n_m = \frac{1}{(h_0)^m m!} v^n_0 M^n_1 \cdots M^n_m.
\]
Since the initial vector is simply $v^n_0 = [0, \ldots, 0, (h_0)^n]$, we may rewrite this as
\begin{equation}
\label{eq:vnm}
v^n_m = \frac{1}{(h_0)^{m-n} m!} V_0 M^n_1 \cdots M^n_m,
\end{equation}
where $$V_0 := [0, \ldots, 0, 1] \in \Z^r.$$

Everything discussed so far holds over $\Z$.
Now consider a prime $p$ of good reduction that does not divide~$h_0$, and let $W_p$ be the Hasse--Witt matrix of $y^2 = f(x)\bmod p$, as in \S\ref{sec:intro}.
Write $W_p^i$ for the $i$th row of $W_p$.
Specializing the above discussion to $n := (p-1)/2$, the entries of $W_p^i$ are given by $w_{ij} = f^n_{pi-j} = h^n_{pi-j-cn} \bmod p$ for $j=1, \ldots, g$.
These are the last~$g$ entries, in reversed order, of the vector $v^n_{pi-cn-1} \bmod p$, which by \eqref{eq:vnm} is equal to
\begin{equation}
\label{eq:vnm-spec}
\frac{1}{(h_0)^{(pi-cn-1)-n}(pi-cn-1)!} V_0 M^n_1 \cdots M^n_{pi-cn-1} \pmod p.
\end{equation}

\begin{remark}
Bostan, Gaudry and Schost \cite{BGS-recurrences} used a formula essentially equivalent to \eqref{eq:vnm-spec} to compute $W_p$, for a single prime $p$.
Their innovation was to evaluate the matrix product using an improvement of the Chudnovsky--Chudnovsky method, leading to an overall complexity bound of $g^{O(1)} p^{1/2+o(1)}$ for computing $W_p$.
\end{remark}

\begin{remark}
The power of $p$ dividing the denominator $(pi-cn-1)!$ in \eqref{eq:vnm-spec} is at least $p^{i-1}$.
In particular, if $i \geq 2$, the denominator is divisible by $p$.
Thus, to compute the second and subsequent rows of $W_p$, the algorithm in \cite{BGS-recurrences} artificially lifts the input polynomial $f \in \Fp[x]$ to $(\Z/p^\lambda \Z)[x]$ for a suitable $\lambda \geq 2$, and then works modulo $p^\lambda$ throughout the computation, reducing modulo $p$ at the end.
In this paper we compute $W_p$ by computing the first row of~$g$ conjugate Hasse--Witt matrices (as explained in \S\ref{sec:Wpa}), so we can work modulo $p$ everywhere.
\end{remark}

We now focus on $W_p^1$, the first row of $W_p$.
Taking $i=1$ in \eqref{eq:vnm-spec} and putting $e := 2 - c$, we find that $W_p^1$ is given by the last $g$ entries of
\begin{equation}
\label{eq:ven}
v^n_{en} \equiv \frac{1}{(h_0)^{n(e-1)}(en)!} V_0 M^n_1 \cdots M^n_{en} \pmod p.
\end{equation}
To compute $W_p^1$, it suffices to evaluate the vector-matrix product $V_0 M^n_1 \cdots M^n_{en}$ modulo~$p$, since, having assumed $p\centernot | h_0$, the denominator $(h_0)^{n(e-1)}(en)!$ is not divisible by $p$ (note that $e\in\{1,2\}$, so $en$ is at most $p-1$).

\begin{remark}
\label{rem:root}
The quantity $(h_0)^n=(h_0)^{(p-1)/2}$ is just the Legendre symbol $\inkron{h_0}{p}=\pm 1$, and
the denominator $\inkron{h_0}{p}^{e-1}(en)!$ is always a fourth root of unity modulo~$p$.
Indeed, if $e = 2$ then $(en)! = (p-1)! \equiv -1 \pmod p$, by Wilson's theorem; if $e=1$ then it is well known that $(en)! = ((p-1)/2)!$ is a fourth root of unity modulo $p$.
However, we know of no easy way to determine \emph{which} fourth root of unity occurs.
For example, if $p>3$ and $p\equiv 3\pmod 4$, then $((p-1)/2)!\equiv \pm 1\pmod p$, where the sign depends on the class number of $\Q(\sqrt{-p})$ modulo $4$; see \cite{Mordell:SqrtWilson}.
\end{remark}

To evaluate \eqref{eq:ven} simultaneously for many primes, a crucial observation is that the matrix $M_k^n$ becomes ``independent of $n$'' after reduction modulo $p = 2n+1$.
More precisely, we have $2(n+1) = 1 \pmod p$, so $2 M_k^n \equiv M_k \pmod p$ where
\begin{equation}
\label{eq:matrix2}
M_k := \begin{bmatrix}
0 &   \cdots &  0 & (r-2k)h_r\\
2 k h_0 &  \cdots  & 0 & (r-1-2k)h_{r-1}\\
\vdots & \ddots & \vdots &\vdots\\
0 &  \cdots  & 2 k h_0 & (1-2k)h_1
\end{bmatrix}.
\end{equation}
Note that $M_k$ is defined over $\Z$, and, unlike $M^n_k$, it is independent of $p$.
Multiplying \eqref{eq:ven} by $2^{en}$ yields the following lemma.
\begin{lemma}
\label{lem:Wp1}
The first row $W_p^1$ of $W_p$ consists of the last $g$ entries of the vector
\begin{equation}
\label{eq:main}
\frac{\kron{2}{p}^e}{\kron{h_0}{p}^{e-1}(en)!} V_0 M_1 \cdots M_{en} \pmod p.
\end{equation}
\end{lemma}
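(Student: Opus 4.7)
The plan is to start from the identity \eqref{eq:ven}, which already states that $W_p^1$ is given by the last $g$ entries of $v^n_{en}\bmod p$, with $v^n_{en}$ expressed as the scalar $1/((h_0)^{n(e-1)}(en)!)$ times the vector-matrix product $V_0 M^n_1\cdots M^n_{en}$. Everything in this formula is already modulo $p$, so only two adjustments remain: to replace the $p$-dependent matrices $M^n_k$ by the $p$-independent matrices $M_k$, and to rewrite the scalar factor in terms of Legendre symbols.

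The central observation is the congruence $2(n+1)\equiv 1\pmod p$, which holds because $p=2n+1$. Comparing \eqref{eq:matrix} and \eqref{eq:matrix2} entry by entry and multiplying each entry of $M^n_k$ by $2$, one sees that $2M^n_k\equiv M_k\pmod p$. I would then multiply the entire identity \eqref{eq:ven} by $2^{en}$ and distribute one factor of $2$ into each of the $en$ matrices in the product, giving
\[
2^{en}\cdot V_0 M^n_1\cdots M^n_{en}\equiv V_0 M_1\cdots M_{en}\pmod p.
\]
Plugging this into \eqref{eq:ven} yields $v^n_{en}\equiv \frac{2^{en}}{(h_0)^{n(e-1)}(en)!} V_0 M_1\cdots M_{en}\pmod p$, with no sign or parity subtleties since the substitution is a clean entrywise scaling.

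For the scalar, I would use Euler's criterion: $2^n=2^{(p-1)/2}\equiv\kron{2}{p}\pmod p$, so $2^{en}\equiv\kron{2}{p}^e$; and similarly $(h_0)^n\equiv\kron{h_0}{p}\pmod p$ (valid because $p\nmid h_0$ by hypothesis), so $(h_0)^{n(e-1)}\equiv\kron{h_0}{p}^{e-1}$. Substituting these into the prefactor produces exactly the scalar in \eqref{eq:main}. Finally, the identification of $W_p^1$ with the last $g$ coordinates of this vector is inherited from the discussion preceding \eqref{eq:ven}, where it was established that $w_{1j}=h^n_{p-j-cn}\bmod p$ for $j=1,\ldots,g$ matches the last $g$ entries of $v^n_{p-cn-1}=v^n_{en}$ read in reverse.

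There is no genuine obstacle in this proof; it is purely a matter of bookkeeping. The only place one must be careful is to verify that $p$ does not divide the denominator $\kron{h_0}{p}^{e-1}(en)!$ (so that the expression in \eqref{eq:main} actually makes sense modulo $p$), but this was already noted in the paragraph following \eqref{eq:ven}: $\kron{h_0}{p}=\pm1$ is a unit mod $p$, and $en\le p-1$ ensures $(en)!$ is coprime to $p$.
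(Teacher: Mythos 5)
Your proposal is correct and follows exactly the paper's route: the lemma is obtained by multiplying \eqref{eq:ven} by $2^{en}$, using $2M^n_k \equiv M_k \pmod p$ (from $2(n+1)\equiv 1 \pmod p$), and rewriting $2^{en}$ and $(h_0)^{n(e-1)}$ via Euler's criterion as $\kron{2}{p}^e$ and $\kron{h_0}{p}^{e-1}$. The unit-denominator check you add was already noted in the paper just before the lemma, so nothing is missing.
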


In order to unify the indexing for the cases $e=1$ and $e=2$ we now define
\[
M'_k := \begin{cases} M_k & e=1,\\ M_{2k-1} M_{2k} & e=2. \end{cases}
\]
Then \eqref{eq:main} becomes
\begin{equation}
\label{eq:main-primed}
\frac{\kron{2}{p}^e}{\kron{h_0}{p}^{e-1}(en)!} V_0 M'_1 \cdots M'_n \pmod p.
\end{equation}
In the next section we will recall how to use the \emph{accumulating remainder tree} algorithm to evaluate the product $V_0 M'_1 \cdots M'_n \pmod p$ for many primes $p$ simultaneously.

\begin{remark}
The key difference between this approach and that of~\cite{HS:HyperellipticHasseWitt} is that here we express $v^n_k$ in terms of $v^n_{k-1}$, instead of expressing $v^k_{2k}$ in terms of $v^{k-1}_{2k-2}$.
In the terminology of~\cite{Harvey:HyperellipticPolytime}, we have replaced ``reduction towards zero'' by ``horizontal reduction''.
In both cases, the aim is to obtain recurrences whose defining matrices are independent of~$p$, so that the machinery of accumulating remainder trees can be applied.
In the original ``reduction towards zero'' method of~\cite{HS:HyperellipticHasseWitt}, the desired independence follows from the choice of indexing, i.e., because the superscript and subscript in $v^k_{2k}$ do not depend on $p$.
In the new method, the superscript in $v^n_k$ does depend on $p$, but \emph{after reduction modulo $p$} the recurrence matrices turn out to be independent of $p$ anyway.
The new method may also be viewed as a special case of (and was inspired by) the ``generic prime'' device introduced in~\cite{Harvey:ArithmeticSchemesPolytime}; see for example the proof of \cite[Prop.~4.6]{Harvey:ArithmeticSchemesPolytime}.

Computationally, the new approach has two main advantages.
First, the matrices $M_k$ are simpler, sparser, and have smaller coefficients than those in \cite{HS:HyperellipticHasseWitt}.
In fact, the formula for~$M_k$ in \cite{HS:HyperellipticHasseWitt} was so complicated that even for genus $2$ we did not write it out explicitly in the paper.
Second, as pointed out earlier, none of the denominators $k h_0$ used to compute the first row of $W_p$ are divisible by $p$, so we can work modulo $p$ throughout; in \cite{HS:HyperellipticHasseWitt} we needed to work modulo a large power of $p$ (at least $p^g$) in order to handle powers of~$p$ appearing in the denominators.
\end{remark}

\section{Accumulating remainder trees}
\label{sec:art}

In this section we recall how the accumulating remainder tree algorithm works and sharpen some of the complexity bounds given in \cite{HS:HyperellipticHasseWitt}.

We follow the notation and framework of \cite[\S4]{HS:HyperellipticHasseWitt}.
Let $b \geq 2$ and let $m_1, \ldots, m_{b-1}$ be a sequence of positive integer moduli.
Let $A_0, \ldots, A_{b-2}$ be a sequence of $r \times r$ integer matrices, and let $V$ be an $r$-dimensional integer row vector.
We wish to compute the sequence of reduced row vectors $C_1, \ldots, C_{b-1}$ defined by
\[
  C_n := V A_0 \cdots A_{n-1} \bmod m_n.
\]
For convenience, we define $m_0 := 1$, so $C_0$ is the zero vector, and we let $A_{b-1}$ be the identity matrix.
(To apply this to the recurrences in \S\ref{sec:RR}, we let $V=V_0$ and $A_k = M'_{k+1}$; note that the index $k$ is shifted by one place.)

In \cite[\S4]{HS:HyperellipticHasseWitt} we gave an algorithm \textsc{RemainderTree} that efficiently computes the vectors $C_1, \ldots, C_{b-1}$ simultaneously.
For the reader's convenience we now recall some details of this algorithm.
As in \cite{HS:HyperellipticHasseWitt}, for simplicity we assume that $b = 2^\ell$ is a power of $2$, though this is not strictly necessary.

We work with complete binary trees of depth $\ell$ with nodes indexed by pairs $(i,j)$ with $0\le i\le \ell$ and $0\le j < 2^i$.
For each node we define the intermediate quantities
\begin{align}
m_{i,j} &:= m_{j 2^{\ell-i}} m_{j 2^{\ell-i} + 1} \cdots m_{(j+1)2^{\ell-i}-1}, \notag \\
A_{i,j} &:= A_{j 2^{\ell-i}} A_{j 2^{\ell-i} + 1} \cdots A_{(j+1)2^{\ell-i}-1}, \label{eq:definetrees} \\
C_{i,j} &:= V A_{i,0} \cdots A_{i,j-1}  \bmod m_{i,j}. \notag
\end{align}
The values $m_{i,j}$ and $A_{i,j}$ may be viewed as nodes in a \emph{product tree}, in which each node is the product of its children,
with leaves $m_j=m_{\ell,j}$ and $A_j=A_{\ell,j}$, for $0 \leq j < b$.
Each vector $C_{i,j}$ is the product of $V$ and all the matrices $A_{i,j'}$ that are nodes on the same level and to the left of $A_{i,j}$, reduced modulo $m_{i,j}$.
The following algorithm, copied verbatim from \cite{HS:HyperellipticHasseWitt}, computes the target values $C_j = C_{\ell,j}$.

\bigskip

\noindent
\textbf{Algorithm} \textsc{RemainderTree}
\vspace{2pt}

\noindent
Given $V, A_0,\ldots,A_{b-1}$, $m_0,\ldots,m_{b-1}$, with $b=2^\ell$, compute $m_{i,j}, A_{i,j}$, $C_{i,j}$ as follows:
\smallskip

\begin{enumerate}[1.]
\setlength{\itemsep}{2pt}
\item Set $m_{\ell,j}=m_j$ and $A_{\ell,j}=A_j$, for $0\le j < b$.
\item For $i$ from $\ell-1$ down to 1:\\
\phantom{For} For $0\le j < 2^i$, set $m_{i,j}=m_{i+1,2j}m_{i+1,2j+1}$ and $A_{i,j}=A_{i+1,2j}A_{i+1,2j+1}$.
\item Set $C_{0,0}=V \bmod m_{0,0}$ and then for $i$ from 1 to $\ell$:\\
\phantom{Set }For $0 \leq j < 2^i$ set $C_{i,j} =
\begin{cases}
C_{i-1,\lfloor j/2\rfloor}\bmod m_{i,j}\qquad&\text{if $j$ is even,}\\ 
C_{i-1,\lfloor j/2\rfloor}A_{i,j-1}\bmod m_{i,j}&\text{if $j$ is odd.}\\ 
\end{cases}$
\end{enumerate}
\bigskip

A complexity analysis of this algorithm was given in \cite[Theorem~4.1]{HS:HyperellipticHasseWitt}, closely following the argument of \cite[Prop.\ 4]{Harvey:HyperellipticPolytime}.
The analysis assumed that classical matrix multiplication was used to compute the products $A_{i+1,2j}A_{i+1,2j+1}$ in step 2.
More precisely, defining $\M_r(s)$ to be the cost (bit complexity) of multiplying $r \times r$ matrices with $s$-bit integer entries, it was assumed that $\M_r(s) = O(r^3 \M(s))$.
Of course, this can be improved to $\M_r(s) = O(r^\omega \M(s))$, where $\omega \leq 3$ is any feasible exponent for matrix multiplication \cite[Ch.~12]{vzGG-compalg}.
For example, we can take $\omega = \log 7 / \log 2 \approx 2.807$ using Strassen's algorithm.

However, it was pointed out in \cite{HS:HyperellipticHasseWitt} and \cite{Harvey:HyperellipticPolytime} that one can do even better in practice, by reusing Fourier transforms of the matrix entries; heuristically this reduces the complexity to only $O(r^2 \M(s))$ when $s$ is large compared to~$r$.
Recently, a rigorous statement along these lines was established by van der Hoeven and the first author \cite{HvdH:zmatmult}.
The following slightly weaker claim is enough for our purposes.
\begin{lemma}\label{lem:zmatmult}
We have
 \[ \M_r(s) = O(r^2 \M(s) + r^\omega s (\log \log s)^2), \]
uniformly for $r$ and $s$ in the region $r = O(s)$.
\end{lemma}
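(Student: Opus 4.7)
The plan is to follow the standard FFT-based strategy for integer matrix multiplication, in which one performs the Fourier transforms once per matrix entry but shares them across the inner matrix products. Concretely, I would choose a transform length $L$ with $L = O(s)$ and, via Kronecker substitution, represent each $s$-bit entry as a polynomial of length $L$ with coefficients of $O(\log s)$ bits, so that integer multiplication reduces to polynomial multiplication. Then I would compute $2r^2$ forward FFTs of length $L$ (one per entry of each input matrix), perform $L$ pointwise multiplications of $r \times r$ matrices over the coefficient ring, compute $r^2$ inverse FFTs, and finally undo the Kronecker substitution to recover the product.

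The cost breaks into two pieces. The $2r^2 + r^2$ FFTs each cost essentially one $s$-bit integer multiplication (i.e., $O(\M(s))$), giving the $O(r^2 \M(s))$ term. The $L$ pointwise $r \times r$ matrix products, done via a fast matrix multiplication algorithm with exponent $\omega$, contribute $O(r^\omega)$ arithmetic operations per point in a ring of size $O(\log s)$ bits; summed over $L = O(s)$ points this accounts for the $O(r^\omega s (\log \log s)^2)$ term, with the $(\log \log s)^2$ factor absorbing the cost of arithmetic on the small ring elements when handled by a suitable recursive FFT / Sch\"onhage--Strassen-type scheme. Observing that the uniformity hypothesis $r = O(s)$ ensures that the packed representation actually fits into the chosen FFT length without blow-up completes the estimate.

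The main obstacle is not the high-level structure, which is classical, but the sharpness of the pointwise term: a naive accounting would replace $(\log\log s)^2$ with $\log s \cdot (\log\log s)^{1+o(1)}$, since each pointwise multiplication involves $O(\log s)$-bit numbers. Shaving that extra $\log s$ factor requires the multi-level FFT scheme in which the pointwise work is itself recursively decomposed into smaller transforms, so that the single factor of $\log s$ appears only in the FFT cost (already absorbed in $\M(s)$), not in the pointwise cost. Since the statement is explicitly billed as a weakening of the main theorem of \cite{HvdH:zmatmult}, the cleanest way to finish is to invoke that result directly: the bound there already has the form $r^2 \M(s) + r^\omega s (\log \log s)^{O(1)}$, and the lemma follows by fixing the exponent at $2$.
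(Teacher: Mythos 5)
Your proposal ends up doing exactly what the paper does: the paper's proof is just the citation of \cite[Prop.~4]{HvdH:zmatmult}, which gives $\M_r(s) = O(r^2\M(s) + r^\omega (s/\log s)\M(\log s))$ in the region $r = O(s)$ (using the running assumption that $\M(s)/(s\log s)$ is increasing), followed by the crude bound $\M(n) = O(n\log^2 n)$ to turn the second term into $r^\omega s(\log\log s)^2$. Your preliminary FFT-sharing sketch is only background for how that cited result is proved, and your closing step ("invoke the result and fix the exponent at $2$") is essentially the same two-line argument, so the proposal is correct and takes the same route.
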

\begin{proof}
According to \cite[Prop.~4]{HvdH:zmatmult} (which depends on our running hypothesis that $\M(s)/(s \log s)$ is increasing), in this region we have
 \[ \M_r(s) = O(r^2 \M(s) + r^\omega (s / \log s) \M(\log s)). \]
The desired bound follows, since certainly $\M(n) = O(n \log^2 n)$.
\end{proof}

Using this lemma, we can prove the following strengthening of \cite[Theorem 4.1]{HS:HyperellipticHasseWitt}.
\begin{theorem}
\label{thm:RT}
Let $B$ be an upper bound on the bit-size of $\prod_{j=0}^{b-1} m_j$, let $B'$ be an upper bound on the bit-size of any entry of $V$, and let $H$ be an upper bound on the bit-size of any $m_0, \ldots, m_{b-1}$ and any entry of $A_0, \ldots, A_{b-1}$.
Assume that $\log r = O(H)$ and that $r = O(\log b)$.
The running time of the \textsc{RemainderTree} algorithm is
\[
   O(r^2 \M(B + bH) \log b + r \M(B')),
\]
and its space complexity is
\[
   O(r^2 (B + bH) \log b + rB').
\]
\end{theorem}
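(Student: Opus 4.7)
The plan is to sharpen the analysis in \cite[Theorem 4.1]{HS:HyperellipticHasseWitt} by substituting the improved matrix-multiplication bound of Lemma~\ref{lem:zmatmult} in place of the classical $O(r^3 \M(s))$ bound used previously, while keeping the level-by-level bookkeeping essentially unchanged.

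First I would establish the bit-size bounds propagated up the product tree. A straightforward induction on $\ell - i$, using $\log r = O(H)$, shows that $\log m_{i,j}$ and the bit-size of any entry of $A_{i,j}$ are both $O(2^{\ell - i} H)$. This controls the cost of step~2: at level $i$ we perform $2^i$ products of $r\times r$ integer matrices with entries of $O(2^{\ell-i}H)$ bits, and Lemma~\ref{lem:zmatmult} bounds each such product by $O\!\bigl(r^2 \M(2^{\ell-i}H) + r^\omega 2^{\ell-i} H (\log\log(2^{\ell-i}H))^2\bigr)$. Since our running assumption gives that $\M(s)/s$ is nondecreasing, the level-$i$ total $2^i r^2 \M(2^{\ell-i}H)$ is dominated by the root-level value $r^2 \M(bH)$, so summing across the $\ell=\log b$ levels yields $O(r^2 \M(bH)\log b)$. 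The second term in Lemma~\ref{lem:zmatmult} contributes $O(r^\omega bH \log b \cdot (\log\log bH)^2)$; using $r=O(\log b)$ so that $r^{\omega-2} = O((\log b)^{\omega-2})$ and the trivial lower bound $\M(bH) = \Omega(bH \log(bH))$, this is absorbed into the first term. The scalar multiplications used to build the $m_{i,j}$ are bounded similarly and contribute only $O(\M(B)\log b)$.

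Next I would analyze step~3. The initial reduction $V \bmod m_{0,0}$ costs $O(r\,\M(B'))$, accounting for the second term of the theorem. Descending the tree, each node at level $i$ requires (i) reducing the parent vector modulo $m_{i,j}$, and for odd $j$ (ii) multiplying by $A_{i,j-1}$ modulo $m_{i,j}$. By fast Euclidean division, reductions at a single level sum to $O(r\,\M(B))$, giving $O(r\,\M(B)\log b)$ overall. For the matrix-vector products, the relevant entries have bit-size at most $O(\log m_{i,j} + 2^{\ell-i}H)$, so using Lemma~\ref{lem:zmatmult} in the vector case (i.e.\ $O(r\,\M(s)+r^{\omega-1}s(\log\log s)^2)$ per product) the total across siblings at level $i$ is $O(r^2\M(B+bH))$, and summing over levels gives the leading term $O(r^2\M(B+bH)\log b)$.

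For space, one stores the product tree of moduli ($O(B\log b)$ bits in aggregate), the $A$-tree (reduced into the relevant subtree modulus at each node, which gives $O(r^2(B+bH)\log b)$ bits once one includes peak temporaries of an unreduced product during construction), the partial $C$-tree ($O(rB\log b)$ bits, since $\sum_j \log m_{i,j}=B$ at each level), and the input vector $V$ ($O(rB')$ bits); these collapse to $O(r^2(B+bH)\log b + rB')$. The main obstacle I expect is bookkeeping: verifying that the $(\log\log s)^2$ overhead from Lemma~\ref{lem:zmatmult} is really absorbed into $r^2\M(B+bH)\log b$ under the hypotheses $r=O(\log b)$ and $\log r=O(H)$, and correctly handling the matrix-vector product in step~3 where the entries combine the $B$-sized modulus and the $H$-sized base-matrix scales. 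Everything else is a standard geometric-sum exercise once the per-level cost bounds are in hand.
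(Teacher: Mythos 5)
Your proposal is correct and follows essentially the same route as the paper: the only new ingredient is the application of Lemma~\ref{lem:zmatmult} to the product-tree matrix multiplications, with the $r^\omega$ term absorbed using $r=O(\log b)$ and the growth hypothesis on $\M$ (note this absorption needs $\omega<3$ strictly, e.g.\ Strassen's exponent, a point the paper makes explicit), exactly as in your level-by-level estimate. The remaining bookkeeping for step~3 and the space bound, which you spell out, is precisely what the paper defers to \cite{HS:HyperellipticHasseWitt}, so there is no genuine gap.
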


This statement differs from \cite[Theorem 4.1]{HS:HyperellipticHasseWitt} in two ways: we have imposed the additional requirement that $r = O(\log b)$, and the $r^3 \M(B + bH) \log b$ term in the time complexity is improved to $r^2 \M(B + bH) \log b$ (we have also changed $h$ to $H$ to avoid a collision of notation).
\begin{proof}
Let us first estimate the complexity of computing the $A_{i,j}$ tree.
The entries of any product $A_{j_1} \cdots A_{j_2-1}$ have bit-size $O((j_2 - j_1)(H + \log r)) = O((j_2 - j_1)H)$.\footnote{There are some missing parentheses in the corresponding estimate in the proof of \cite[Theorem 4.1]{HS:HyperellipticHasseWitt}.}
Thus at level $\ell-i$ of the tree, each matrix product has cost
\[
  O(\M_r(2^i H)) = O(r^2 \M(2^i H) + r^\omega (2^i H) (\log \log 2^i H)^2),
\]
by Lemma~\ref{lem:zmatmult}.
There are $2^{\ell-i}$ such products at this level, whose total cost is
\[
  O(r^2 \M(bH) + r^\omega (bH) (\log \log bH)^2).
\]
Since we assumed that $r = O(\log b) = O(\log bH)$, this is bounded by
\[
  O(r^2 \M(bH) + r^2 (bh) (\log bH)^{\omega-2} (\log \log bH)^2).
\]
But we may take $\omega<3$, and then certainly $(bH) (\log bH)^{\omega-2} (\log \log bH)^2 = O(\M(bH))$, again by the assumption that $\M(s)/(s \log s)$ is increasing.
The first term dominates, and we are left with the bound $O(r^2 \M(bH))$ for this level of the tree, and hence
\[
  O(r^2 \M(bH) \log b)
\]
for the whole tree.

The rest of the argument is exactly the same as in \cite{HS:HyperellipticHasseWitt} and \cite{Harvey:HyperellipticPolytime}; we omit the details.
\end{proof}

In \cite{HS:HyperellipticHasseWitt} we also gave an algorithm \textsc{RemainderForest}, which has the same input and output specifications as \textsc{RemainderTree}, but saves space by splitting the work into $2^\kappa$ subtrees, where $\kappa \in [0, \ell]$ is a parameter.
(In \cite{HS:HyperellipticHasseWitt} the parameter $\kappa$ was called $k$.)
This is crucial for practical computations, as \textsc{RemainderTree} is extremely memory-intensive.

Theorem \ref{thm:RT} leads to the following complexity bound for \textsc{RemainderForest}, improving Theorem 4.2 of \cite{HS:HyperellipticHasseWitt}.
We omit the proof, which is exactly the same as in \cite{HS:HyperellipticHasseWitt}, with obvious modifications to take account of Theorem \ref{thm:RT}.
\begin{theorem}
\label{thm:RF}
Let $B$ be an upper bound on the bit-size of $\,\prod_{j=0}^{b-1}m_j$ such that $B/2^\kappa$ is an upper bound on the bit-size of $\prod_{j=st}^{st+t-1}m_j$ for all $s$, where $t := 2^{\ell - \kappa}$.
Let $B'$ be an upper bound on the bit-size of any entry of $V$,
and let~$H$ be an upper bound on the bit-size of any $m_0,\ldots,m_{b-1}$ and any entry in $A_0,\ldots,A_{b-1}$.
Assume that $\log r = O(H)$ and that $r = O(\log b)$.
The running time of the \textsc{RemainderForest} algorithm is
\[
O(r^2\M(B+bH)(\ell-\kappa) + 2^\kappa r^2\M(B)+r\M(B')),
\]
and its space complexity is
\[
O(2^{-\kappa}r^2(B+bH)(\ell-\kappa)+r(B+B')).
\]
\end{theorem}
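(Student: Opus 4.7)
The plan is to regard the RemainderForest algorithm as $2^\kappa$ independent invocations of RemainderTree on subtrees of depth $\ell-\kappa$ and width $t := 2^{\ell-\kappa}$, glued together by a thin top-level pass that prepares, for each subtree, the correct initial vector. Each subtree will be analysed by a direct appeal to Theorem \ref{thm:RT}, and the top-level pass will be organised sequentially so that only one subtree is resident in memory at any time.

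For the $s$th subtree (with $0 \le s < 2^\kappa$), the product of its $t$ moduli has bit-size at most $B/2^\kappa$ by hypothesis, each entry of the matrices $A_j$ still has bit-size at most $H$, and the hypotheses $\log r = O(H)$ and $r = O(\log t)$ carry over from the global assumptions. Theorem \ref{thm:RT} therefore bounds the cost of processing the subtree by
\[
O\bigl(r^2\M(B/2^\kappa + tH)(\ell-\kappa) + r\M(B_s')\bigr),
\]
where $B_s'$ is the bit-size of the initial vector supplied to that subtree. Summing over the $2^\kappa$ subtrees and using the superadditivity $2^\kappa \M(x/2^\kappa) = O(\M(x))$ (which follows from our standing assumption that $\M(s)/(s\log s)$ is increasing), the first term collapses to $O(r^2\M(B+bH)(\ell-\kappa))$, the main term in the stated bound.

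Producing the initial vectors is the only subtle part. I would maintain a running vector $U_s$ equal to $V A_0\cdots A_{st-1}$ reduced modulo $\prod_{s'\ge s} M_{s'}$, a modulus of bit-size at most $B$. Since the root of the product tree built inside subtree $s-1$ already furnishes the matrix $B_{s-1} := A_{(s-1)t}\cdots A_{st-1}$, with entries of bit-size $O(tH)$, one obtains $U_s$ from $U_{s-1}$ by a single vector-matrix product followed by a reduction, at cost $O(r^2\M(B+tH)) = O(r^2\M(B) + r^2\M(tH))$. Summed over the $2^\kappa$ updates this contributes $O(2^\kappa r^2\M(B) + r^2\M(bH))$, of which the first summand is the $2^\kappa r^2\M(B)$ term in the theorem and the second is absorbed into $r^2\M(B+bH)(\ell-\kappa)$. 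The initial reduction $U_0 = V \bmod \prod_s M_s$ produces the $r\M(B')$ summand, and the per-subtree reductions $V_s = U_s \bmod M_s$ each cost $O(r\M(B))$ and are likewise absorbed.

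The space analysis proceeds analogously: only one subtree is resident at a time, costing $O(r^2(B/2^\kappa + tH)(\ell-\kappa)) = O(2^{-\kappa}r^2(B+bH)(\ell-\kappa))$ by Theorem \ref{thm:RT}, while storing the running vector $U_s$ together with the original $V$ contributes $O(r(B+B'))$. The main obstacle I expect is the superadditivity bookkeeping, in particular the steps that convert $2^\kappa\M((B+bH)/2^\kappa)$ into $\M(B+bH)$ and $2^\kappa\M(tH)$ into $\M(bH)$; these are exactly the manipulations used in the proof of \cite[Theorem 4.2]{HS:HyperellipticHasseWitt}, and the only substantive change from that argument is the replacement of the per-subtree $r^3$ factor by the $r^2$ delivered by Theorem \ref{thm:RT}.
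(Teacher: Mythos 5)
Your decomposition is exactly the one the paper has in mind: it omits the proof precisely because it is the argument of \cite[Theorem 4.2]{HS:HyperellipticHasseWitt} --- process the $2^\kappa$ subtrees sequentially with \textsc{RemainderTree}, carry a running vector $V A_0\cdots A_{st-1}$ reduced modulo the product of the remaining moduli, and update it using the root of each subtree's matrix product tree --- with the old $r^3$ tree bound replaced by Theorem~\ref{thm:RT}. Your accounting of the three terms (subtree work, $2^\kappa$ vector updates and reductions modulo a $B$-bit modulus, initial reduction of $V$) and the superadditivity manipulations match that proof.

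One claim does need repair: you assert that $r=O(\log t)$ ``carries over from the global assumptions,'' but $r=O(\log b)$ does not imply $r=O(\log t)$, since $\kappa$ may be taken close to $\ell$ (even $t=1$), so you cannot invoke Theorem~\ref{thm:RT} per subtree as a black box in general. The fix is to aggregate the matrix-multiplication overhead across the whole forest rather than per subtree: by Lemma~\ref{lem:zmatmult}, at each of the $\ell-\kappa$ levels the total extra cost beyond $r^2\M$-terms is $O(r^\omega\, bH\,(\log\log bH)^2)$ summed over all $2^\kappa$ subtrees, and this is $O(r^2\M(bH))$ using $r=O(\log b)=O(\log bH)$, $\omega<3$, and the monotonicity of $\M(s)/(s\log s)$ --- exactly the estimate already made inside the proof of Theorem~\ref{thm:RT}. (Alternatively, note that in the application $\kappa=O(\log\ell)$, so $\log t=\Theta(\log b)$ and your per-subtree invocation is legitimate there; but the theorem as stated allows any $\kappa\in[0,\ell]$.) With that adjustment your argument is the paper's.
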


\section{Computing the first row}
\label{sec:algs}

As above we work with a hyperelliptic curve $C/\Q$ of genus $g$ defined by $y^2=f(x)$, where $f(x)=\sum_{i=c}^d f_ix^i \in \Z[x]$ is squarefree and $f_cf_d\ne 0$.
We define $h(x):=f(x)/x^c=\sum_{i=0}^{r} h_ix_i$ with $r:=d-c$ and put $e:=2-c\in\{1,2\}$, as in \S\ref{sec:RR}.
We call a prime~$p$ \emph{admissible} if it is a prime of good reduction that does not divide~$h_0$.
The following algorithm uses \eqref{eq:main-primed} to compute $W_p^1$, the first row of the Hasse--Witt matrix $W_p$, simultaneously for admissible primes $p\le N$.

\medskip

\noindent
\textbf{Algorithm} \textsc{ComputeHasseWittFirstRows}
\vspace{2pt}

\noindent
Given an integer $N>1$ and a hyperelliptic curve $y^2=f(x)$ with $f(x), h(x)$, $r$, and~$e$ as above, compute $W_p^1$ for all admissible primes $p\le N$ as follows:
\smallskip

\begin{enumerate}[1.]
\setlength{\itemsep}{2pt}
\item
Let $\ell=\lceil \log_2 N\rceil - 1$ and initialize a sequence of moduli $m_n$ by
\[
m_n :=
\begin{cases}
p & \text{if $p = 2n+1$ is an admissible prime in $[1,N]$}, \\
1 & \text{otherwise},
\end{cases}
\]
for all integers $n \in [1, 2^\ell)$.

\item
Run \textsc{RemainderForest} with $b:=2^\ell$ and $\kappa := \lceil 2 \log_2 \ell \rceil$ on inputs $V:=V_0$, $A_k := M'_{k+1}$, and $m_k$ with $k\in[0,b)$, where $V_0$ and $M'_k$ are as defined in \S\ref{sec:RR}, to compute
\[
u_n := V_0 M'_1 \cdots M'_n \bmod m_n,
\]
for all integers $n\in [1,N/2)$. (One may set $A_k$ to the zero matrix for $k\ge N/2$).

\item
Similarly, use \textsc{RemainderForest} to compute $\delta_n := (en)! \bmod m_n$ for all $n \in [1, N/2)$.
\newline
(For $e=2$ one can skip this step and simply set $\delta_n:=-1$; see Remark \ref{rem:root}.)

\item
For each admissible prime $p=2n+1\in [1,N]$ output the last $g$ entries of the vector
\[
\frac{\kron{2}{p}^e}{\kron{h_0}{p}^{e-1} \delta_n} u_n \bmod m_n
\]
in reverse order (this is the vector $W_p^1$).

\end{enumerate}
\medskip

\begin{remark}
The bulk of the work in \textsc{ComputeHasseWittFirstRows} occurs in step~2.
In the $e=2$ case, this step is about twice as expensive as the $e=1$ case, because there are twice as many matrices $M_k$; equivalently, the entries of $M'_k$ are about twice as big.
This suggests that it is advantageous to change variables, if possible, to achieve $e=1$.
(Step~3 is more expensive when $e=1$, but the extra cost is negligible compared to the savings achieved in step 2.)
This can be done if the curve has a rational Weierstrass point, by moving the Weierstrass point to $x = 0$.
In \S\ref{sec:ratws} we discuss further optimizations along these lines that may utilize up to $g+1$ rational Weierstrass points.
\end{remark}

\begin{theorem}\label{thm:firstrows}
Assume that $g = O(\log N)$ and that $\log \max_i |f_i| = O(\log N)$. The algorithm \textsc{ComputeHasseWittFirstRows} runs in $O(g^2 \M(N\log N)\log N)$ time and $O(g^2 N)$ space.
\end{theorem}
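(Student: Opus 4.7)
The plan is to apply Theorem \ref{thm:RF} to the main \textsc{RemainderForest} call in step 2 and show that steps 1, 3, and 4 are of lower order.

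First I identify the parameters for step 2. The matrices $M'_k$ are $r \times r$ with $r = d - c \leq 2g + 2 = O(g) = O(\log N)$, and the number of indices is $b = 2^\ell = \Theta(N)$. The hypothesis $\log\max_i |f_i| = O(\log N)$ combined with $k \leq N$ shows each entry of $M_k$ has bit-size $O(\log N)$; taking the product $M_{2k-1}M_{2k}$ in the $e=2$ case adds only $O(\log r) = O(\log\log N)$ bits, so we may take $H = O(\log N)$ as a simultaneous bound on the moduli $m_n \leq N$ and the entries of $A_k$. Since $V = V_0 = [0,\ldots,0,1]$, we have $B' = O(1)$. The two side hypotheses $\log r = O(H)$ and $r = O(\log b)$ required by Theorem \ref{thm:RF} are immediate. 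With $\kappa = \lceil 2\log_2\ell\rceil$ we have $2^\kappa = O(\log^2 N)$, $t = 2^{\ell-\kappa} = \Theta(N/\log^2 N)$, and $\ell - \kappa = \Theta(\log N)$.

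The only delicate bookkeeping is the balance hypothesis on $B$: we need $\prod_{j=0}^{b-1}m_j \leq 2^B$ and $\prod_{j=st}^{st+t-1}m_j \leq 2^{B/2^\kappa}$ for every $s$. Chebyshev's estimate $\theta(N) = O(N)$ gives $B = O(N)$ for the total product. A subtree of length $t$ collects primes in an interval of length at most $2t$, and the Brun--Titchmarsh inequality (or an explicit Chebyshev-type bound for short intervals) yields $\sum_{p \in [x,x+2t]} \log p = O(t) + O(\log N) = O(N/\log^2 N)$, which matches $B/2^\kappa$ for $N$ large enough. This is the one place where a nontrivial analytic input enters.

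Plugging into Theorem \ref{thm:RF} yields
\[
O\bigl(r^2\M(B+bH)(\ell-\kappa) + 2^\kappa r^2\M(B) + r\M(B')\bigr) = O\bigl(g^2\M(N\log N)\log N + g^2\log^2 N\cdot\M(N) + g\bigr).
\]
The hypothesis that $\M(s)/(s\log s)$ is increasing gives $\M(N\log N) \geq \M(N)\log(N\log N) \geq \M(N)\log N$, so the middle term is absorbed and step 2 runs in $O(g^2\M(N\log N)\log N)$ time. The space bound $O(2^{-\kappa}r^2(B+bH)(\ell-\kappa) + r(B+B')) = O(g^2 N + gN) = O(g^2 N)$ follows similarly. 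Step 3 is the $r=1$ specialization of step 2 (e.g., $A_k = (2k+1)(2k+2)$ for $e=2$), costing $O(\M(N\log N)\log N)$ time and $O(N)$ space, both absorbed. Step 1 is $O(N)$, and step 4 performs only $O((\log N)^{O(1)})$ bit operations per prime (one Legendre symbol and one modular inverse), contributing $O(N(\log N)^{O(1)})$ overall, again absorbed. The principal obstacle is the subtree-balance estimate; everything else reduces to careful substitution into Theorem \ref{thm:RF}.
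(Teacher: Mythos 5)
Your proposal is correct and follows essentially the same route as the paper: apply Theorem~\ref{thm:RF} to step 2 with $r=O(g)$, $B=O(N)$, $B'=O(1)$, $H=O(\log N)$ and $\kappa=\lceil 2\log_2\ell\rceil$, then absorb the secondary term using the monotonicity of $\M(s)/(s\log s)$ and observe that steps 1, 3, and 4 are negligible. The only difference is that you verify the subtree-balance hypothesis on $B/2^\kappa$ directly (via a Brun--Titchmarsh type short-interval bound), whereas the paper simply cites the corresponding argument in the proof of Theorem~1.1 of \cite{HS:HyperellipticHasseWitt}; your bound is a valid way to fill in that cited detail.
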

\begin{proof}
Let us analyze the complexity of step 2, using Theorem~\ref{thm:RF}.
The prime number theorem implies that we may take $B = O(N)$, and the hypothesis on $B/2^\kappa$ follows easily (see the proof of \cite[Theorem~1.1]{HS:HyperellipticHasseWitt}).
We also have $B' = O(1)$ and
\[
  H = O(\log N + \log g + \log \max_i |h_i|) = O(\log N),
\]
by the definition of the matrices $M'_k$.
The remaining hypotheses are satisfied since we have $r = O(g) = O(\log N)$.
By Theorem~\ref{thm:RF} and our choice of $\kappa$, step 2 runs in time
\[
  O(g^2 \M(N \log N) \log N + (\log N)^2 g^2 \M(N));
\]
since $\M(s)/(s \log s)$ is increasing, this simplifies to $O(g^2 \M(N \log N) \log N)$.
The space complexity of step 2 is
\[
  O((\log N)^{-2} g^2 (N \log N) \log N + gN) = O(g^2 N).
\]

The second invocation of \textsc{RemainderForest} (step 3) is certainly no more expensive than the first. The remaining steps such as enumerating primes and computing quadratic residue symbols take negligible time (see the proof of \cite[Theorem~1.1]{HS:HyperellipticHasseWitt}).
\end{proof}

\begin{remark}
In practice, the parameter $\kappa$ is chosen based on empirical performance considerations, rather than strictly according to the formula given above.
\end{remark}

\begin{remark}
The $O(g^2 N)$ space complexity of \textsc{ComputeHasseWittFirstRows} is larger than the $O(gN)$ size of its output.
Most of this space can be reused in subsequent calls to \textsc{ComputeHasseWittFirstRows}, so we only need $O(g^2N)$ space to handle~$g$ calls, as in algorithm \textsc{ComputeHasseWittMatrices} below; this allows us to obtain the $O(g^2 N)$ space bound of Theorem~\ref{thm:main}.
\end{remark}

For inadmissible primes $p$, \textsc{ComputeHasseWittFirstRows} does not yield any information about $W_p^1$. When $f_0=0$ every good prime is admissible (if $f_0=0$ and $p$ divides $h_0=f_1$ then $f(x)$ is not squarefree modulo $p$), but when $f_0 \neq 0$ (the case $e=2-c=2$), there may be primes of good reduction that divide $f_0$ and are therefore inadmissible.

To compute $W_p^1$ at such primes, we must use an alternative algorithm.
There are many possible choices, but we present here an algorithm that uses the framework developed in \S\ref{sec:RR}.
The idea is to evaluate the product in \eqref{eq:main} in the most naive way possible.
To our knowledge, this simple algorithm has not been mentioned previously in the literature.

We first set some notation.
Let $\bar f\in \Fp[x]$ denote a polynomial for which $y^2=\bar f(x)$ defines a hyperelliptic $C_p/\Fp$ of genus $g$ (so $\bar f$ is squarefree of degree $\bar d = 2g+1$ or $\bar d =2g+2$), let $\bar{c}$ be the least integer for which $\bar f_{\bar{c}}\ne 0$, and set $\bar r :=\bar d-\bar c$.
(Note: in the case of interest, $\bar f = f\bmod p$, but $p$ divides~$f_0\ne 0$, so $\bar{c}\ne c$).
Now define $\bar h(x):= \sum_{i=0}^{\bar r} \bar h_ix^i= \bar f(x)/x^{\bar c}$ and $\bar e:= 2-\bar c$.
Let $n:= (p-1)/2$ as usual, and let $\bar M_k$ be the matrix over $\Fp$ defined in~\eqref{eq:matrix2}, with $h_i$ replaced by $\bar h_i$.
The following algorithm uses Lemma \ref{lem:Wp1} to compute the first row $W_p^1$ of the Hasse--Witt matrix of $C_p$.

\bigskip

\noindent
\textbf{Algorithm} \textsc{ComputeHasseWittFirstRow}
\vspace{2pt}

\noindent
Let $y^2=\bar f(x)$ be a hyperelliptic curve $C_p/\Fp$, with notation as above.
Compute the first row $W_p^1$ of the Hasse--Witt matrix of $C_p$ as follows:
\smallskip

\begin{enumerate}[1.]
\setlength{\itemsep}{2pt}
\item Initialize $u_0 := [0, \ldots, 0, 1] \in (\Fp)^r$ and $\delta_0 := 1 \in \Fp$.
\item For $k$ from $1$ to $\bar e n$, compute $u_k := u_{k-1} \bar M_k$ and $\delta_k := \delta_{k-1} k$.
\item Output the last $g$ entries of the vector
\[
\frac{\kron{2}{p}^{\bar e}}{\kron{\bar h_0}{p}^{\bar e-1}\delta_{\bar en}} u_{\bar en}.
\]
\end{enumerate}
\medskip

\begin{theorem}
\label{thm:firstrow}
The algorithm \textsc{ComputeHasseWittFirstRow} runs in $O(g p \M(\log p))$ time and uses $O(g \log p)$ space.
\end{theorem}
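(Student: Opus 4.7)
The plan is to exploit the extreme sparsity of the matrices $\bar M_k$ defined via \eqref{eq:matrix2}. Each $\bar M_k$ has the constant entry $2k\bar h_0$ on its subdiagonal, a last column whose $i$-th entry is $(\bar r + 1 - i - 2k)\bar h_{\bar r + 1 - i}$, and zeros elsewhere. Consequently the vector-matrix product $u_{k-1}\bar M_k$ requires only $O(\bar r) = O(g)$ operations in $\Fp$: the first $\bar r - 1$ entries of $u_k$ are each obtained by a single multiplication with the precomputed scalar $2k\bar h_0$, while the last entry is the inner product of $u_{k-1}$ with the last column of $\bar M_k$.

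The time analysis then proceeds as follows. Step 2 executes $\bar e n = O(p)$ iterations, each performing $O(g)$ arithmetic operations in $\Fp$ at cost $O(\M(\log p))$ per operation, plus one scalar multiplication to update $\delta_k$. This yields $O(gp\,\M(\log p))$ bit operations for step 2, which dominates: step 1 is trivial, and step 3 requires only $O(g)$ field operations together with a Legendre symbol and a modular inverse, both of cost $O(\M(\log p)\log p)$, which is absorbed.

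For the space bound, I would point out that the algorithm need only maintain the current $u_k$ (overwriting $u_{k-1}$), the current $\delta_k$, and the stored coefficients $\bar h_0, \ldots, \bar h_{\bar r}$; the entries of $\bar M_k$ are generated on the fly from $k$ and the $\bar h_i$, so no matrix is ever stored in full. This amounts to $O(g)$ elements of $\Fp$, each occupying $O(\log p)$ bits, giving the $O(g\log p)$ space bound.

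The main ``obstacle'' here is really only bookkeeping: being explicit about the sparsity pattern and confirming that the subdiagonal entry $2k\bar h_0$ and the $\bar r$ last-column entries can be regenerated cheaply at each step without violating the space bound. Once this is spelled out, both bounds follow immediately, and correctness is inherited directly from Lemma~\ref{lem:Wp1} applied with $\bar f$ in place of $f$.
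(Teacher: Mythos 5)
Your proposal is correct and follows essentially the same argument as the paper: each $\bar M_k$ has only $O(g)$ nonzero entries (subdiagonal plus last column) that can be generated on the fly, giving $O(g\M(\log p))$ bit operations per iteration over $O(p)$ iterations, with the space bound coming from overwriting $u_{k-1},\delta_{k-1}$ and never storing a full matrix. The only (harmless) difference is cosmetic: you are more explicit about the sparsity pattern and charge slightly more than the paper's $O(\log^2 p)$ for the step-3 inversion, which remains negligible either way.
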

\begin{proof}
Each matrix $\bar M_k$ has at most $2r-1 = O(g)$ nonzero entries, each of which can be computed using $O(1)$ ring operations.
Each iteration of step~2 uses $O(g \M(\log p))$ bit operations, and the number of iterations is $O(p)$,
yielding a total cost of $O(g p \M(\log p))$ for step~2.
The Legendre symbol and division in step 3 require at most $O(\log^2 p)$ bit operations, which is negligible.

For the space bound, each $u_k$ and $\delta_k$ may overwrite $u_{k-1}$ and $\delta_{k-1}$, respectively.
Each~$\bar M_k$ requires just $O(g\log p)$ space and can be computed as needed and then discarded.
\end{proof}

The space bound in Theorem~\ref{thm:firstrow} is optimal; in fact, it matches the size of both the input and the output.
In practice, this algorithm performs quite well for small to moderate $p$, primarily due its extremely small memory footprint; see \S\ref{sec:perf} for performance details.

\section{Hasse--Witt matrices of translated curves}
\label{sec:Wpa}
In this section we fix a prime $p$ of good reduction for our hyperelliptic equation $y^2 = f(x)$.
For each integer $a$, let $W_p(a)=[w_{ij}(a)]$ denote the Hasse--Witt matrix of the translated curve $y^2=f(x+a)$ at $p$, and let $W_p^1(a)$ denote its first row.
In this section we show that if we know $W_p^1(a_i)$ for integers $a_1, \ldots, a_g$ that are distinct modulo $p$, then we can deduce the entire Hasse--Witt matrix $W_p=[w_{ij}]$ of our original curve at $p$.

We first study how $W_p$ transforms under the translation $x \mapsto x + a$.
\begin{theorem}
With notation as above we have
\[
W_p(a) = T(a)W_pT(-a),
\]
where $T(a):=[t_{ij}(a)]$ is the $g\times g$ upper triangular matrix with entries
\[
t_{ij}(a) := \binom{j-1}{i-1}a^{j-i}\qquad (1\le i,j\le g).
\]
\end{theorem}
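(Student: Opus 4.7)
The plan is to verify the identity entry by entry via a direct computation, reducing everything modulo $p$ with Lucas' theorem. Setting $F(x) := f(x)^{(p-1)/2}$, the observation that $f(x+a)^{(p-1)/2} = F(x+a)$ gives
$$w_{ij}(a) = [x^{pi-j}] F(x+a) = \sum_{m\ge pi-j} F_m \binom{m}{pi-j}\, a^{m-pi+j},$$
so the task reduces to simplifying each $\binom{m}{pi-j} \bmod p$.

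Since $1 \le i, j \le g < p$, the integer $pi - j = p(i-1) + (p-j)$ already displays its base-$p$ digits. Applying Lucas' theorem, one finds $\binom{m}{pi-j} \equiv 0 \pmod p$ unless $m = pk - l$ for some $k \ge 1$ and $1 \le l \le j$, in which case
$$\binom{pk-l}{pi-j} \equiv \binom{k-1}{i-1} \binom{p-l}{p-j} \pmod p.$$
A short mod-$p$ computation using $(p-l)!/(p-j)! \equiv (-1)^{j-l}(j-1)!/(l-1)! \pmod p$ further simplifies $\binom{p-l}{p-j} \equiv (-1)^{j-l}\binom{j-1}{l-1} \pmod p$.

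Next I would restrict the summation range. The factor $\binom{k-1}{i-1}$ vanishes for $k < i$, while the degree bound $\deg F \le (g+1)(p-1)$ forces $F_{pk-l} = 0$ for $k \ge g+1$ whenever $l \le g$ (with a brief case split for $\deg f = 2g+1$ versus $2g+2$), so the nontrivial range is $i \le k \le g$ and $1 \le l \le j$. Finally, Fermat's little theorem gives $a^{m - pi + j} = a^{p(k-i)} a^{j-l} \equiv a^{k-i}\cdot a^{j-l} \pmod p$, and gathering terms produces
$$w_{ij}(a) \equiv \sum_{k=i}^g \sum_{l=1}^j \binom{k-1}{i-1} a^{k-i} \cdot F_{pk-l} \cdot \binom{j-1}{l-1} (-a)^{j-l} \pmod p,$$
which is exactly $(T(a) W_p T(-a))_{ij}$ after recognising the three factors as $T(a)_{ik}$, $w_{kl}$, and $T(-a)_{lj}$.

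The main technical hurdle is the Lucas-theorem step together with the mod-$p$ evaluation of $\binom{p-l}{p-j}$; the rest is essentially bookkeeping. A more conceptual (but equivalent) approach would exploit the functoriality of the Cartier operator under the isomorphism $(x,y) \mapsto (x+a, y)$ between the translated curve and the original, with $T(a)$ arising as the change-of-basis matrix between the bases $\{(x+a)^{i-1} dx/y\}$ and $\{x^{i-1} dx/y\}$; however, the combinatorial approach is more direct and avoids committing to a particular intrinsic interpretation of $W_p$.
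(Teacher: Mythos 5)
Your proof is correct, but it takes a genuinely different route from the paper's. The paper does not touch coefficients at all: it invokes Yui's Proposition 2.2, which says that the Hasse--Witt matrix transforms by conjugation under a change of basis of the space of regular differentials (the Frobenius-twisted matrix $S^{(p)}$ collapsing to $S$ because the entries lie in $\Fp$), and then simply observes that the basis $\theta_j=(x+a)^{j-1}\,dx/y$ is expressed in terms of $\omega_i=x^{i-1}\,dx/y$ by the matrix $T(a)$, with $T(a)^{-1}=T(-a)$ --- i.e.\ precisely the ``conceptual'' alternative you sketch in your closing sentence. Your argument instead works straight from the definition $w_{ij}=f^{(p-1)/2}_{pi-j}\bmod p$, expanding $F(x+a)$, applying Lucas' theorem to $\binom{m}{p(i-1)+(p-j)}$, the congruence $\binom{p-l}{p-j}\equiv(-1)^{j-l}\binom{j-1}{l-1}\pmod p$, the degree bound $\deg F\le (g+1)(p-1)$ (which in fact kills $F_{pk-l}$ for $k\ge g+1$, $l\le g$ in both cases $\deg f=2g+1$ and $2g+2$, so no case split is actually needed), and Fermat's little theorem; these steps all check out and the resulting triple sum is exactly $(T(a)W_pT(-a))_{ij}$. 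What each approach buys: yours is elementary and self-contained, requiring no facts about the Cartier operator, and it shows concretely why no Frobenius twist intervenes; the paper's is much shorter, explains structurally why a conjugation must appear (the matrix depends only on the choice of basis of regular differentials), and is uniform in $p$. One caveat you should make explicit: your digit decomposition $pi-j=p(i-1)+(p-j)$ needs $0\le p-j$, i.e.\ $p\ge g$, a hypothesis absent from the theorem statement (though it holds in every application the paper makes of it, and the paper's change-of-basis proof does not need it).
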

\begin{proof}
Let $F$ be the function field of the curve $y^2=f(x)$ over $\Fp$ (the fraction field of $\Fp[x,y]/(y^2-f(x)))$.
The space $\Omega_F(0)$ of regular differentials on $F$ (as defined in \cite[\S 1.5]{Stichtenoth:FunctionFields}, for example) is a $g$-dimensional $\Fp$-vector space with basis $(\omega_1,\ldots,\omega_g)$, where
\[
\omega_i := \frac{x^{i-1}dx}{y}\qquad\qquad\,\,\, (1\le i\le g);
\]
see \cite[Ex.\ 4.6]{Stichtenoth:FunctionFields} and \cite[Eq.\ 3]{Yui:HasseWittMatrix}.
It follows from \cite[Prop.\ 2.2]{Yui:HasseWittMatrix} that the Hasse--Witt matrix $W_p(a)$ has the form $SW_pS^{-1}$, where $S=[s_{ij}]$ is the change of basis matrix from the basis~$(\omega_1,\ldots,\omega_g)$ to the basis $(\theta_1,\ldots,\theta_g)$, with
\[
\theta_j := \frac{(x+a)^{j-1}dx}{y}\qquad\, (1\le j\le g).
\]
(Note that we have replaced the matrix $S^{(p)} = [s_{ij}^p]$ that appears in \cite[Prop.\ 2.2]{Yui:HasseWittMatrix} with $S=[s_{ij}]$ because we are working over $\Fp$ and therefore have $s_{ij}^p=s_{ij}$.) 
We then have
\[
\theta_j=\frac{(x+a)^{j-1}dx}{y}=\sum_{i=1}^j\binom{j-1}{i-1}a^{j-i}\frac{x^{i-1}dx}{y}=\sum_{i=1}^g\binom{j-1}{i-1}a^{j-i}\omega_i = \sum_{i=1}^g t_{ij}(a)\omega_i,
\]
so $S=T(a)$, and $S^{-1}=T(a)^{-1}=T(-a)$.  Thus $W_p(a)=SW_pS^{-1}=T(a)W_pT(-a)$.
\end{proof}


Now suppose that $p\ge g$ and that we have computed $W_p^1(a_i)$ for integers~$a_1, \ldots, a_g$ that are distinct modulo $p$.
Writing out the equation for the first row of $W_p(a_i) = T(a_i)W_pT(-a_i)$ explicitly, we have
\[
  w_{1j}(a_i) = \sum_{k=1}^g \sum_{\ell=1}^g t_{1k}(a_i) w_{k\ell} t_{\ell j}(-a_i) = \sum_{k=1}^g \sum_{\ell=1}^j \binom{j-1}{\ell-1} (-1)^{j-\ell} a_i^{k-1+j-\ell} w_{k\ell}.
\]
As $i$ and $j$ range over $\{1, \ldots, g\}$, this may be regarded as a system of $g^2$ linear equations in the $g^2$ unknowns $w_{k\ell}$ over $\Fp$.

We claim that this system has a unique solution.
Indeed, separating out the terms with $\ell=j$, we may write
\[
 w_{1j}(a_i) = \bigl(w_{1j} + a_i w_{2j} + a_i^2 w_{3j} + \cdots + a_i^{g-1} w_{gj}\bigr) + w_j(a_i),
\]
where the last term
\begin{equation}\label{eq:wj}
w_j(a) := \sum_{k=1}^g\sum_{\ell=1}^{j-1}\binom{j-1}{\ell-1}(-1)^{j-\ell}a^{k-1+j-\ell}w_{k\ell}
\end{equation}
depends only on $a$ and the first $j-1$ columns of $W_p$.
Thus for each $j$ we have a system
\begin{equation}\label{eq:system}
\begin{bmatrix}
1 & a_1 & a_1^2 &\cdots & a_1^{g-1}\\
1 & a_2 & a_2^2 &\cdots & a_2^{g-1}\\
\vdots & \vdots &\vdots &\vdots\\
1 & a_g & a_g^2 &\cdots & a_g^{g-1}\\
\end{bmatrix}
\begin{bmatrix}
w_{1j}\\
w_{2j}\\
\vdots\\
w_{gj}
\end{bmatrix}
=
\begin{bmatrix}
w_{1j}(a_1) - w_j(a_1)\\
w_{1j}(a_2) - w_j(a_2)\\
\vdots\\
w_{1j}(a_g) - w_j(a_g)
\end{bmatrix}.
\end{equation}
The matrix on the left is a Vandermonde matrix $V(a_1,\ldots,a_g)$; it is non-singular because the $a_i$ are distinct in $\Fp$.
Therefore for each $j = 1, \ldots, g$ the system \eqref{eq:system} determines the $j$th column of $W_p$ uniquely in terms of the $w_{1j}(a_i)$ and the first $j-1$ columns of $W_p$.
Given as input $w_{1j}(a_i)$ for all $i$ and $j$, we may solve this system successively for $j = 1, \ldots, g$ to determine all $g$ columns of $W_p$.

\begin{example}
Consider the hyperelliptic curve
\[
y^2 = f(x) = 2x^8+3x^7+5x^6+7x^5+11x^4+13x^3+17x^2+19x+23,
\]
over the finite field $\F_{97}$, and let $a_1=0,a_2=1,a_3=2$.
Computing the first rows of the Hasse--Witt matrices $W_p(0),W_p(1),W_p(2)$ yields
\begin{eqnarray*}
w_{11}(0) = \phantom{0}9,\quad &w_{12}(0)=37,\quad &w_{13}(0)=54,\\
w_{11}(1) = 43,\quad &w_{12}(1)=60,\quad &w_{13}(1)=30,\\
w_{11}(2) = \phantom{0}5,\quad &w_{12}(2)=70,\quad &w_{13}(2)=84.
\end{eqnarray*}
Solving the system
\[
\begin{bmatrix}
1 & 0 & 0\\
1 & 1 & 1\\
1 & 2 & 4\\
\end{bmatrix}
\begin{bmatrix}
w_{11}\\
w_{21}\\
w_{31}
\end{bmatrix}
=
\begin{bmatrix}
w_{11}(0)\\
w_{11}(1)\\
w_{11}(2)\\
\end{bmatrix}
=
\begin{bmatrix}
9\\
43\\
5
\end{bmatrix}
\]
gives $w_{11} = 9, w_{21}=70, w_{31}=61$, the first column of $W_p$.
Using \eqref{eq:wj} to compute $w_2(0)=0$, $w_2(1)=54$, $w_2(2)=87$, we then solve
\[
\begin{bmatrix}
1 & 0 & 0\\
1 & 1 & 1\\
1 & 2 & 4\\
\end{bmatrix}
\begin{bmatrix}
w_{12}\\
w_{22}\\
w_{32}
\end{bmatrix}
=
\begin{bmatrix}
w_{12}(0)-w_2(0)\\
w_{12}(1)-w_2(1)\\
w_{12}(2)-w_2(2)\\
\end{bmatrix}
=
\begin{bmatrix}
37\\
6\\
80
\end{bmatrix}
\]
to get the second column $w_{12}=37$, $w_{22}=62$, $w_{32}=4$.
Finally, using \eqref{eq:wj} to compute $w_3(0)=0$, $w_3(1)=31$, $w_3(2)=88$ we solve
\[
\begin{bmatrix}
1 & 0 & 0\\
1 & 1 & 1\\
1 & 2 & 4\\
\end{bmatrix}
\begin{bmatrix}
w_{13}\\
w_{23}\\
w_{33}
\end{bmatrix}
=
\begin{bmatrix}
w_{13}(0)-w_3(0)\\
w_{13}(1)-w_3(1)\\
w_{13}(2)-w_3(2)\\
\end{bmatrix}
=
\begin{bmatrix}
54\\
96\\
93
\end{bmatrix}
\]
to get the third column $w_{13}=54$, $w_{23}=16$, $w_{33}=26$, and we have
\[
W_p = \begin{bmatrix}
9&37&54\\
70&62&16\\
61&4&26
\end{bmatrix},
\]
which is the Hasse--Witt matrix of $y^2=f(x)$.
\end{example}

We now bound the complexity of this procedure.  The bound given in the lemma below is likely not the best possible, but it suffices for our purposes here.

\begin{lemma}\label{lemma:WpFromFirstRows}
Given $W_p^1(a_1),\ldots, W_p^1(a_g)$, we may compute $W_p$ in
\[
O((g^3 + g^2 \log \log p) \M(\log p))
\]
time and $O(g^2\log p)$ space.
\end{lemma}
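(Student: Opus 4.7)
My plan is to implement the explicit procedure just described before the lemma: recover the columns of $W_p$ one at a time by solving the $g$ Vandermonde systems \eqref{eq:system}, exploiting the fact that the coefficient matrix $V := V(a_1,\ldots,a_g)$ is the same for all $j$.

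First, I would preprocess for fast Vandermonde inversion via Lagrange interpolation: form the polynomial $P(x) := \prod_i (x-a_i)\in \Fp[x]$ by successive multiplication by linear factors, form the quotients $P_i(x) := P(x)/(x-a_i)$ by synthetic division, compute the scalars $d_i := P_i(a_i) = \prod_{k\ne i}(a_i-a_k)$ by Horner, and invert each $d_i$ in $\Fp$. All the polynomial manipulations use $O(g^2)$ ring operations, and the $g$ inversions (optionally batched via Montgomery's trick into a single inversion and $O(g)$ multiplications) contribute at most $O(g\,\M(\log p)\log\log p)$ using a fast extended Euclidean algorithm. Once this data is cached, applying $V^{-1}$ to any right-hand side $b$ amounts to forming the polynomial $\sum_i (b_i/d_i)\,P_i(x)$ and reading off its coefficients, at a cost of $O(g^2)$ ring operations per solve.

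With preprocessing in hand, I would iterate $j = 1,\ldots,g$ as in the discussion preceding the lemma. For each $j$, compute the $g$ right-hand-side entries $w_{1j}(a_i) - w_j(a_i)$, using only the already-computed first $j-1$ columns of $W_p$ together with the input values $W_p^1(a_i)$. The crucial observation is that \eqref{eq:wj} expresses $w_j(a)$ as a polynomial in $a$ of degree at most $2g-2$, whose $O(g)$ coefficients are linear combinations of the known $w_{k\ell}$ (with $\ell < j$) and can be assembled from the $O(g^2)$ terms of the double sum in $O(g^2)$ ring operations; evaluating this polynomial at the $g$ points $a_i$ by Horner then costs a further $O(g^2)$ operations. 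Applying the cached $V^{-1}$ to the resulting vector yields the $j$-th column of $W_p$ in another $O(g^2)$ operations. Summing over $j$, the main loop costs $O(g^3)$ ring operations, each of bit-cost $O(\M(\log p))$, for a total time of $O((g^3 + g\log\log p)\M(\log p))$, comfortably within the claimed bound (the looser $g^2\log\log p$ term in the statement leaves room for a less aggressive inversion strategy). Space is dominated by the precomputed Vandermonde data, the powers $a_i^k$, and the accumulating columns of $W_p$, each contributing $O(g^2)$ field elements and hence $O(g^2\log p)$ bits overall.

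The main obstacle is a pitfall rather than a difficulty: naively evaluating \eqref{eq:wj} at $g$ separate points would cost $\Theta(g^2)$ operations per $(i,j)$ pair and thus $\Theta(g^4)$ overall, breaking the bound. Avoiding this requires exploiting the polynomial structure in $a$ so that the $O(g^2)$ expansion work per $j$ is shared across the $g$ evaluation points. Everything else is routine bookkeeping around Lagrange interpolation and the standard complexity of modular inversion.
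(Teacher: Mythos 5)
Your proposal is correct and follows the same overall strategy as the paper: recover $W_p$ column by column by solving the Vandermonde systems \eqref{eq:system}, with preprocessing so each solve costs $O(g^2)$ ring operations, and with the right-hand sides $w_{1j}(a_i)-w_j(a_i)$ organized so that the total work is $O(g^3)$ ring operations rather than the naive $\Theta(g^4)$. The two sub-steps are implemented differently, though equivalently: for the Vandermonde solves the paper invokes the explicit inverse formula of Eisinberg--Fedele at $O(g^2)$ field operations, while you cache the Lagrange interpolation data $P_i(x)$, $d_i^{-1}$ (slightly cheaper on inversions, hence your $g\log\log p$ term versus the paper's $g^2\log\log p$); and to evaluate \eqref{eq:wj} the paper incrementally maintains the quantities $\beta_\ell(a_i)=\sum_k a_i^{k-1}w_{k\ell}$ and expresses $w_j(a_i)$ as an $O(g)$-term sum in the $\beta_\ell(a_i)$, whereas you assemble $w_j(a)$ once per $j$ as a polynomial of degree $O(g)$ in $a$ and Horner-evaluate it at the $g$ points. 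Both organizations cost $O(g^2)$ ring operations per column and $O(g^2)$ field elements of storage, so both land within the stated time and space bounds; your explicit identification of the $\Theta(g^4)$ pitfall is exactly the issue the paper's $\beta_\ell$ device is designed to avoid.
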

\begin{proof}
We first invert the Vandermonde matrix $V(a_1,\ldots,a_g)$.
This requires $O(g^2)$ field operations in $\Fp$, by \cite{EF:Vandermonde}, or $O(g^2 \M(\log p) \log \log p)$ bit operations.

To compute $W_p$, we use the algorithm sketched above.
More precisely, let us define $\beta_j(a) := \sum_{k=1}^g a^{k-1} w_{kj}$, so that
\[
 w_j(a_i) = \sum_{\ell=1}^{j-1} \binom{j-1}{\ell-1}(-a_i)^{j-\ell} \beta_\ell(a_i).
\]
Having computed column $j-1$ of $W_p$, we may compute $\beta_{j-1}(a_i)$ for all $i$, at a cost of $O(g^2)$ ring operations in $\Fp$.
We then use the formula above to compute $w_j(a_i)$ for all~$i$, again at a cost of $O(g^2)$ ring operations.
Finally we solve \eqref{eq:system} for the $j$th column of $W_p$, using the known inverse of $V(a_1,\ldots,a_g)$, with $O(g^2)$ ring operations.
This procedure is repeated for $j=1, \ldots, g$, for a total of $O(g^3)$ ring operations, or $O(g^3 \M(\log p))$ bit operations.

The space bound is clear; we only need space for $O(g^2)$ elements of $\Fp$.
\end{proof}


\section{Computing the whole matrix}
\label{sec:whole}

In this section we assemble the various components we have developed to obtain an algorithm for computing the whole Hasse--Matrix matrix $W_p$.
We begin with an algorithm that handles a single prime~$p \geq g$.
\bigskip
\pagebreak[1]

\noindent
\textbf{Algorithm} \textsc{ComputeHasseWittMatrix}
\vspace{2pt}

\noindent
Given a hyperelliptic curve $y^2=f(x)$ over $\Fp$ of genus $g \le p$, compute the Hasse--Witt matrix $W_p$ as follows:
\smallskip

\begin{enumerate}[1.]
\setlength{\itemsep}{2pt}
\item For $g$ distinct $a_i\in\Fp$, compute $W_p^1(a_i)$ by applying \textsc{ComputeHasseWittFirstRow} to the equation $y^2 = f(x + a_i)$.
\item Deduce $W_p$ from $W_p^1(a_1), \ldots, W_p^1(a_g)$ using Lemma~\ref{lemma:WpFromFirstRows}.
\end{enumerate}
\smallskip

\begin{theorem}
\label{thm:oneprime}
The algorithm \textsc{ComputeHasseWittMatrix} runs in $O(g^2p \M(\log p))$ time and $O(g^2\log p)$ space.
\end{theorem}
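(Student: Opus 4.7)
The plan is to compose the two building blocks already established: \textsc{ComputeHasseWittFirstRow} (analyzed in Theorem~\ref{thm:firstrow}) for step~1, and the Vandermonde-based reconstruction from Lemma~\ref{lemmaWpFromFirstRows_placeholder} for step~2. First I would note that the hypothesis $g \le p$ lets us pick $g$ distinct elements $a_1, \ldots, a_g \in \Fp$ (for concreteness $a_i := i-1$), so step~1 is well defined.

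For step~1, each iteration has two sub-costs: forming the translated polynomial $f(x+a_i) \in \Fp[x]$, and then calling \textsc{ComputeHasseWittFirstRow}. A Taylor shift of a polynomial of degree $O(g)$ costs $O(g^2)$ ring operations in $\Fp$, i.e.\ $O(g^2 \M(\log p))$ bit operations, so the $g$ shifts contribute $O(g^3 \M(\log p))$ in total. By Theorem~\ref{thm:firstrow}, each of the $g$ calls to \textsc{ComputeHasseWittFirstRow} runs in $O(g p \M(\log p))$ time and $O(g \log p)$ space, giving step~1 a total running time of $O(g^2 p \M(\log p))$. The working space of a single call ($O(g\log p)$) can be reused between calls, but we must retain the $g$ first-row outputs $W_p^1(a_i)$ for step~2, using $O(g^2 \log p)$ space in aggregate.

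Step~2 is a direct appeal to Lemma~\ref{lemmaWpFromFirstRows_placeholder}, costing $O((g^3 + g^2 \log \log p)\M(\log p))$ time and $O(g^2 \log p)$ space. Combining both steps, the total time is
\[
O\bigl(g^2 p \M(\log p) + g^3 \M(\log p) + g^2 \log \log p \cdot \M(\log p)\bigr),
\]
and the hypothesis $g \le p$ gives both $g^3 \le g^2 p$ and $g^2 \log \log p \le g^2 p$, so the last two terms are absorbed into the main one, yielding $O(g^2 p \M(\log p))$ time and $O(g^2 \log p)$ space as claimed. There is no real obstacle: the proof is essentially bookkeeping on top of Theorem~\ref{thm:firstrow} and Lemma~\ref{lemmaWpFromFirstRows_placeholder}. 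The only point worth checking is that the polynomial shifts, though not the dominant cost, are cheap enough to be absorbed, and that running the $g$ first-row computations in sequence (rather than in parallel) keeps the working-space bound at the same order as the output.
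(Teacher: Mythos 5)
Your proposal is correct and follows essentially the same route as the paper: bound the $g$ Taylor shifts by $O(g^2)$ ring operations each (absorbed since $g\le p$), apply Theorem~\ref{thm:firstrow} to each of the $g$ first-row computations for the dominant $O(g^2 p\,\M(\log p))$ term, and absorb the $O((g^3+g^2\log\log p)\M(\log p))$ cost of Lemma~\ref{lemma:WpFromFirstRows} using $g\le p$, with the space bound coming from reusing working space across calls while retaining the $g$ first rows.
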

\begin{proof}
For each $i$, computing the polynomial $f(x + a_i)$ costs $O(g^2)$ ring operations in $\Fp$, and this is dominated by the $O(g p \M(\log p))$ bit complexity of computing $W_p^1(a_i)$ (Theorem~\ref{thm:firstrow}), since $g \leq p$.
Thus we can compute $W_p^1(a_1), \ldots, W_p^1(a_g)$ in $O(g^2 p \M(\log p))$ time.
By Lemma~\ref{lemma:WpFromFirstRows} we may deduce $W_p$ in time
\[
O((g^3 + g^2 \log \log p) \M(\log p)) = O(g^2 p \M(\log p)).
\]
The space bound follows immediately from the bounds in Theorem~\ref{thm:firstrow} and Lemma~\ref{lemma:WpFromFirstRows}.
\end{proof}

Finally we arrive at \textsc{ComputeHasseWittMatrices}, which computes $W_p$ for all good primes $p\le N$ by computing $W_p^1(a_i)$ for $g$ chosen integers $a_i$ and all suitable primes $p\le N$.
We rely on \textsc{ComputeHasseWittMatrix} to fill in the values of $W_p$ at any good primes $p$ that are  inadmissible for one of the translated curve $y^2=f(x+a_i)$ or for which the $a_i$ are not distinct modulo $p$.
\bigskip

\noindent
\textbf{Algorithm} \textsc{ComputeHasseWittMatrices}
\vspace{2pt}

\noindent
Given a hyperelliptic curve $y^2=f(x)$ with $f\in\Z[x]$, and an integer~$N>1$, compute the Hasse--Witt matrices $W_p$ for all primes $p\le N$ of good reduction as follows:
\smallskip

\begin{enumerate}[1.]
\setlength{\itemsep}{2pt}
\item For odd primes $p < g$ of good reduction compute $W_p$ directly from its definition by expanding $f(x)^{(p-1)/2}\bmod p$ and selecting the appropriate coefficients.
\item Choose $g$ distinct integers $a_1,\ldots,a_g$ that are either roots of $f(x)$ or in the interval $[0,g)$.
Let $\SSS$ be the set of primes $p \in [g,N]$ of good reduction that divide some $a_i - a_j$ or some nonzero $f(a_i)$.
\item For primes $p \in \SSS$, use \textsc{ComputeHasseWittMatrix} to compute $W_p$.
\item Use \textsc{ComputeHasseWittFirstRows} to compute $W_p^1(a_1),\ldots,W_p^1(a_g)$ for all primes $p\in[g,N]$ of good reduction that do not lie in~$\SSS$.
Then deduce $W_p$ for each such prime using Lemma~\ref{lemma:WpFromFirstRows}.
\end{enumerate}

\begin{remark}\label{rem:interleave}
As in \cite{HS:HyperellipticHasseWitt}, the execution of the $g$ calls to \textsc{ComputeHasseWittFirstRows} in step 4 may be interleaved so that $W_p^1(a_1),\ldots,W_p^1(a_g)$ are computed for batches of primes~$p$ corresponding to subtrees of the remainder forest, and the computation of the matrices $W_p$ for these primes can then be completed batch by batch.
\end{remark}

We now prove the main theorem announced in \S\ref{sec:intro},
which states that \textsc{ComputeHasseWittMatrices} runs in $O(g^3\M(N\log N)\log N)$ time and $O(g^2 N)$ space, under the hypotheses $g\log g=O(\log N)$ and $\log \max_i|f_i|=O(\log N)$.

In order to simplify the analysis, \emph{we assume that we always choose $a_i = i-1$} in step 2.
(The complexity bounds of the theorem hold without this assumption, i.e., if we allow $a_i$ to be any root of $f(x)$, but we do not prove this.)

\begin{proof}[Proof of Theorem~\ref{thm:main}]
The time complexity of step 1 is $O(g \M(\log N)+\M(g^2\log g))$ for each prime;
the first term covers the cost of reducing $f(x)$ modulo $p$, and the second covers the cost of computing $f(x)^{(p-1)/2}$ in the ring $\Fp[x]$.
There are at most $g$ primes, so the overall cost is $O(g^2 \M(\log N) + g \M(g^2\log g))$.
The space complexity is $O(g^2 \log g)$ for each prime, and also $O(g^2 \log g)$ overall.
Both bounds are dominated by the bounds given in the theorem.

The coefficient of $x^j$ in the translated polynomial $f^{(i)}(x) := f(x + a_i)$ is $\sum_k \binom{k}{j} a^{k-j} f_k$, thus
\[
\max_j |(f^{(i)})_j| \leq (2g+2) \binom{2g+2}{g+1} g^{2g+2} \max_k |f_k|,
\]
and therefore $\log \max_j |(f^{(i)})_j| = O(g \log g + \log \max_k |f_k|) = O(\log N)$.
In particular, the bit-size of $f(a_i)$ is $O(\log N)$, so the number of primes that divide any particular $f(a_i)$ is $O(\log N)$.
Consequently, $|\SSS| = O(g \log N)$.

By Theorem~\ref{thm:oneprime}, the total time spent in step 3 is $O((g \log N) (g^2 N \M(\log N)))$, which is dominated by $O(g^3 \M(N \log N) \log N)$.
The space used in step 3 is negligible.

Finally, computing $W_p^1(a_1),\ldots, W_p^1(a_g)$ for suitable primes $p\le N$ in step 4 requires time $O(g^3\M(N\log N)\log N)$ and space $O(g^2N)$, by Theorem~\ref{thm:firstrow}.
This dominates the contribution from Lemma~\ref{lemma:WpFromFirstRows}, which is at most $O((N/\log N)(g^3 + g^2 \log \log N) \M(\log N))$ over all primes.
The space complexity of the latter is also negligible.
\end{proof}


\subsection{Optimizations for curves with rational Weierstrass points}\label{sec:ratws}

For hyperelliptic curves with one or more rational Weierstrass points the complexity of \textsc{ComputeHasseWittMatrices} can be improved by a significant constant factor.
For curves with a rational Weierstrass point $P$, we can ensure that $d=2g+1$ by putting $P$ at infinity.
This also ensures that every translated curve $y^2=f(x+a_i)$ also has $d=2g+1$, and we get an overall speedup by a factor of at least
\[
\left(\frac{2g+2}{2g+1}\right)^2
\]
compared to the case where $C$ has no rational Weierstrass points, since we work with vectors and matrices of dimension $2g+1$ rather than $2g+2$.
For example, the speedup is approximately $(6/5)^2 = 1.44$ for $g=2$ and $(8/7)^2 \approx 1.31$ for $g=3$.

Alternatively, putting $P$ at zero and choosing $a_1=0$ speeds up the computation of $W_p^1(a_1)$ by a factor of two (because we have half as many matrices $M_k$ to deal with), but it does not necessarily speed up the computation of $W_p^1(a_2),\ldots,W_p^1(a_g)$.  If we have just a single rational Weierstrass point we should put it at zero when $g\le 2$, but otherwise we should put it at infinity.

When $C$ has more than one rational Weierstrass point we can get a further performance improvement by putting rational Weierstrass points at both zero and infinity and choosing $a_1=0$.
If there are any other rational Weierstrass points, we should then choose $a_2,\ldots,a_g$ to be the negations of the $x$-coordinates of any other rational Weierstrass points.
Without loss of generality we may assume that these coordinates are all integral, since once we have $y^2=f(x)$ with a rational Weierstrass point at infinity, the polynomial $f(x)$ has odd degree and can be made monic (and integral) by scaling $x$ and $y$ appropriately.
These changes may impact the size of the coefficients of $f$, but such changes are typically small, and may even be beneficial (in any case, for sufficiently large $N$ the benefit outweighs the cost).

For each $a_i$ for which $y^2=f(x+a_i)$ has rational Weierstrass points at zero and infinity we get a speedup by a factor of
\begin{equation}\label{eq:speedup}
2\left(\frac{g+1}{g}\right)^2
\end{equation}
in the time to compute $W_p^1(a_i)$, relative to the case where $y^2=f(x)$ has no rational Weierstrass points; we get a factor of $2$ because the number of matrices $M_k$ is halved, and then a factor of $((2g+2)/(2g))^2$ from the reduction in dimension of matrices and vectors.
When $C$ has $g+1$ rational Weierstrass points we get a total speedup by the factor given in \eqref{eq:speedup}, relative to the case where there are no rational Weierstrass points.
The speedup observed in practice is a bit better than this, as may be seen in Tables~\ref{table:hw1vshw2g2} and~\ref{table:hw1vshw2g3}.  This can be explained by the fact that the cost of matrix multiplication is actually super-quadratic at the lower levels of the accumulating remainder trees.

\begin{remark}\label{rem:lastrow}
The same speedup can be achieved when there are just $g$ rational Weierstrass points, by also computing the \emph{last} row of $W_p$ and only using $g-1$ translated curves.
\end{remark}

\section{Performance results}\label{sec:perf}

We implemented our algorithms using the GNU C compiler (\texttt{gcc} version 4.8.2) and the GNU multiple precision arithmetic library (GMP version 6.0.0).
The timings listed in the tables that follow were all obtained on a single core of an Intel Xeon E5-2697v2 CPU running at a fixed clock rate of 2.70GHz with 256 GB of RAM.

\begin{table}[!htb]
\setlength{\tabcolsep}{6pt}
\begin{tabular}{@{}rrrrrrrrrrrr@{}}
&&\multicolumn{2}{c}{$w=0$}&&\multicolumn{2}{c}{$w=1$}&&\multicolumn{2}{c}{$w=2$}&\\
\cmidrule(r){3-5}\cmidrule(r){6-8}\cmidrule(r){9-11}
$N$&&\texttt{hw1}&\texttt{hw2}&&\texttt{hw1}&\texttt{hw2}&&\texttt{hw1}&\texttt{hw2}\\
\midrule
$2^{14}$&&0.8&0.2&&0.5&0.1&&0.3&0.1\\
$2^{15}$&&2.6&0.6&&1.2&0.3&&0.6&0.2\\
$2^{16}$&&5.8&1.6&&3.2&0.8&&1.6&0.4\\
$2^{17}$&&14.0&4.1&&8.1&2.2&&4.1&1.0\\
$2^{18}$&&33.1&9.5&&20.4&5.1&&9.7&2.3\\
$2^{19}$&&81.3&21.8&&49.6&12.1&&23.5&5.3\\
$2^{20}$&&\pnum{192}&51.3&&\pnum{116}&28.2&&56.4&12.6\\
$2^{21}$&&\pnum{470}&\pnum{122}&&\pnum{274}&66.7&&\pnum{142}&29.0\\
$2^{22}$&&\pnum{1183}&\pnum{280}&&\pnum{638}&\pnum{155}&&\pnum{335}&67.6\\
$2^{23}$&&\pnum{2830}&\pnum{654}&&\pnum{1510}&\pnum{353}&&\pnum{789}&\pnum{160}\\
$2^{24}$&&\pnum{6500}&\pnum{1520}&&\pnum{3500}&\pnum{845}&&\pnum{1820}&\pnum{347}\\
$2^{25}$&&\pnum{15000}&\pnum{3460}&&\pnum{8190}&\pnum{1890}&&\pnum{4240}&\pnum{834} \\
$2^{26}$&&\pnum{34100}&\pnum{7480}&&\pnum{18700}&\pnum{4280}&&\pnum{9620}&\pnum{1870} \\
\bottomrule
\end{tabular}
\bigskip

\caption{Comparison of old (\texttt{hw1}) and new (\texttt{hw2}) average polynomial-time algorithms for genus 2 curves over $\Q$ with $w$ rational Weierstrass points (times in CPU seconds).}\label{table:hw1vshw2g2}
\end{table}

\begin{table}[!htb]
\setlength{\tabcolsep}{4pt}
\begin{tabular}{@{}rrrrrrrrrrrrrr@{}}
&&\multicolumn{2}{c}{$w=0$}&&\multicolumn{2}{c}{$w=1$}&&\multicolumn{2}{c}{$w=2$}&&\multicolumn{2}{c}{$w=3$}&\\
\cmidrule(r){3-5}\cmidrule(r){6-8}\cmidrule(r){9-11}\cmidrule(r){12-14}
$N$&&\texttt{hw1}&\texttt{hw2}&&\texttt{hw1}&\texttt{hw2}&&\texttt{hw1}&\texttt{hw2}&&\texttt{hw1}&\texttt{hw2}\\
\midrule
$2^{14}$&&3.3&0.5&&2.3&0.4&&1.5&0.3&&1.4&0.2\\
$2^{15}$&&10.8&1.5&&6.1&1.0&&5.1&0.7&&3.7&0.5\\
$2^{16}$&&25.9&4.6&&16.8&2.9&&10.0&2.1&&9.9&1.2\\
$2^{16}$&&62.1&12.6&&40.4&7.8&&23.2&5.5&&23.6&3.3\\
$2^{18}$&&\pnum{147}&28.9&&96.1&17.3&&57.1&12.6&&56.7&7.7\\
$2^{19}$&&\pnum{347}&68.1&&\pnum{230}&42.7&&\pnum{141}&30.2&&\pnum{139}&18.5\\
$2^{20}$&&\pnum{878}&\pnum{156}&&\pnum{544}&99.4&&\pnum{326}&68.2&&\pnum{329}&42.6\\
$2^{21}$&&\pnum{1950}&\pnum{363}&&\pnum{1280}&\pnum{231}&&\pnum{792}&\pnum{161}&&\pnum{782}&97.1\\
$2^{22}$&&\pnum{4500}&\pnum{841}&&\pnum{3130}&\pnum{528}&&\pnum{1840}&\pnum{370}&&\pnum{1820}&\pnum{225}\\
$2^{23}$&&\pnum{10700}&\pnum{1920}&&\pnum{7370}&\pnum{1260}&&\pnum{4380}&\pnum{859}&&\pnum{4330}&\pnum{533}\\
$2^{24}$&&\pnum{24300}&\pnum{4360}&&\pnum{16800}&\pnum{2830}&&\pnum{10200}&\pnum{2010}&&\pnum{9960}&\pnum{1200}\\
$2^{25}$&&\pnum{60400}&\pnum{9910}&&\pnum{39000}&\pnum{6220}&&\pnum{23800}&\pnum{4430}&&\pnum{2320}&\pnum{2710}\\
$2^{26}$&&\pnum{128000}&\pnum{21000}&&\pnum{83900}&\pnum{13700}&&\pnum{53400}&\pnum{9930}&&\pnum{53100}&\pnum{5980}\\
\bottomrule
\end{tabular}
\bigskip

\caption{Comparison of old (\texttt{hw1}) and new (\texttt{hw2}) average polynomial-time algorithms for genus 3 hyperelliptic curves over $\Q$ with $w$ rational Weierstrass points (times in CPU seconds).}\label{table:hw1vshw2g3}
\end{table}

\begin{table}[!htb]
\setlength{\tabcolsep}{8pt}
\begin{tabular}{@{}rrrrrrrr@{}}
&&\multicolumn{2}{c}{genus 2}&&\multicolumn{2}{c}{genus 3}&\\
\cmidrule(r){3-4}\cmidrule(r){6-8}
$N$&&\textbf{\texttt{sj}}&\textbf{\texttt{hw2}}&&\textbf{\texttt{hf}}&\textbf{\texttt{hw2}}\\
\midrule
$2^{14}$&&0.2&0.1&&7.2&0.4\\
$2^{15}$&&0.6&0.3&&16.3&1.0\\
$2^{16}$&&1.7&0.9&&39.1&2.9\\
$2^{17}$&&5.5&2.2&&98.3&7.8\\
$2^{18}$&&19.2&5.3&&\pnum{255}&18.3\\
$2^{19}$&&78.4&12.5&&\pnum{695}&43.2\\
$2^{20}$&&\pnum{271}&27.8&&\pnum{1950}&98.8\\
$2^{21}$&&\pnum{1120}&64.5&&\pnum{5600}&\pnum{229}\\
$2^{22}$&&\pnum{2820}&\pnum{155}&&\pnum{16700}&\pnum{537}\\
$2^{23}$&&\pnum{9840}&\pnum{357}&&\pnum{51200}&\pnum{1240}\\
$2^{24}$&&\pnum{31900}&\pnum{823}&&\pnum{158000}&\pnum{2800}\\
$2^{25}$&&\pnum{105000}&\pnum{1890}&&\pnum{501000}&\pnum{6280}\\
$2^{26}$&&\pnum{349000}&\pnum{4250}&&\pnum{1480000}&\pnum{13900}\\
$2^{27}$&&\pnum{1210000}&\pnum{9590}&&\pnum{4360000}&\pnum{31100}\\
$2^{28}$&&\pnum{4010000}&\pnum{21200}&&\pnum{12500000}&\pnum{69700}\\
$2^{29}$&&\pnum{13200000}&\pnum{48300}&&\pnum{39500000}&\pnum{155000}\\
$2^{30}$&&\pnum{45500000}&\pnum{108000}&&\pnum{120000000}&\pnum{344000}\\
\bottomrule
\end{tabular}
\bigskip

\caption{Comparison of the new average polynomial-time algorithm (\texttt{hw2}) to \texttt{smalljac} (\texttt{sj}) in genus 2 and \texttt{hypellfrob} (\texttt{hf}) in genus 3 for hyperelliptic curves over $\Q$ with one rational Weierstrass point (times in CPU seconds).
For $N>2^{26}$ the \texttt{sj} and \texttt{hf} timings were estimated by sampling $p\le N$.}\label{table:sjhfcmp}
\end{table}

\begin{table}[!htb]
\setlength{\tabcolsep}{4pt}
\begin{tabular}{@{}rrrrrrrrrrrrrrrrr@{}}
&&\multicolumn{2}{c}{$N=2^{16}$}&&\multicolumn{2}{c}{$N=2^{18}$}&&\multicolumn{2}{c}{$N=2^{20}$}&&\multicolumn{2}{c}{$N=2^{22}$}&&\multicolumn{2}{c}{$N=2^{24}$}&\\
\cmidrule(r){3-5}\cmidrule(r){6-8}\cmidrule(r){9-11}\cmidrule(r){12-14}\cmidrule(r){15-17}
$g$&&\textbf{\texttt{hf}}&\textbf{\texttt{hw2}}&&\textbf{\texttt{hf}}&\textbf{\texttt{hw2}}&&\textbf{\texttt{hf}}&\textbf{\texttt{hw2}}&&\textbf{\texttt{hf}}&\textbf{\texttt{hw2}}&&\textbf{\texttt{hf}}&\textbf{\texttt{hw2}}\\
\midrule
3&&39&3&&255&18&&\num{1950}&99&&\num{16700}&537&&\num{158000}&\num{2800}\\
4&&77&9&&479&60&&\num{3550}&322&&\num{30000}&\num{1680}&&\num{277000}&\num{8640}\\
5&&140&18&&836&136&&\num{5990}&694&&\num{48900}&\num{3590}&&\num{440000}&\num{18000}\\
6&&239&28&&\num{1360}&278&&\num{9330}&\num{1400}&&\num{74200}&\num{7340}&&\num{661000}&\num{35200}\\
7&&375&44&&\num{2070}&492&&\num{13800}&\num{2460}&&\num{106000}&\num{12500}&&\num{949000}&\num{60600}\\
8&&570&63&&\num{3060}&825&&\num{19800}&\num{4310}&&\num{147000}&\num{21400}&&\num{1330000}&\num{103000}\\
9&&835&89&&\num{4410}&\num{1400}&&\num{27500}&\num{7120}&&\num{200000}&\num{34200}&&\num{1780000}&\num{166000}\\
10&&\num{1189}&122&&\num{6060}&\num{2230}&&\num{37400}&\num{10900}&&\num{273000}&\num{53700}&&\num{2340000}&\num{259000}\\
\bottomrule
\end{tabular}
\bigskip

\caption{Comparison of the new average polynomial-time algorithm (\texttt{hw2}) to \texttt{hypellfrob} (\texttt{hf}) for hyperelliptic curves over $\Q$ of genus 3 to 10 with one rational Weierstrass point (times in CPU seconds).}\label{table:highergenus}
\end{table}

In our tests we used curves with small coefficients; we generally set $f_{d-i}=p_{i+1}$, where~$p_i$ is the $i$th prime, implying that $\log\max|f_i|=O(g\log g)$.
For curves with $w>1$ rational Weierstrass points we chose $f(x)$ monic with integer roots at $0,\ldots, w-2$.
For genus 3 curves with 3 rational Weierstrass points we applied Remark~\ref{rem:lastrow}.

Tables \ref{table:hw1vshw2g2}-\ref{table:highergenus} compare the performance of the new average polynomial-time algorithm \textsc{ComputeHasseWittMatrices} to the average polynomial-time algorithm of \cite{HS:HyperellipticHasseWitt}, and also to the \texttt{smalljac} library \cite{smalljac} based on \cite{KS:Lseries}, which was previously the fastest available package for performing these computations in genus $g\le 2$ (within the feasible range of $N$), and the \texttt{hypellfrob} library \cite{hypellfrob} based on \cite{Har-kedlaya}, which was previously the fastest available package for performing these computations in genus $g \ge 3$.

\begin{table}[!htb]
\setlength{\tabcolsep}{6pt}
\begin{tabular}{@{}rrrrrrrrrrrrrrrrr@{}}
&&\multicolumn{2}{r}{$p=2^{16}+1$}&&\multicolumn{2}{c}{$p=2^{17}+29$}&&\multicolumn{2}{c}{$p=2^{18}+3$}&&\multicolumn{2}{c}{$p=2^{19}+21$}&&\multicolumn{2}{c}{$p=2^{20}+7$}&\\
\cmidrule(r){3-5}\cmidrule(r){6-8}\cmidrule(r){9-11}\cmidrule(r){12-14}\cmidrule(r){15-17}
$g$&&\textbf{\texttt{hf}}&\textbf{\texttt{hwp}}&&\textbf{\texttt{hf}}&\textbf{\texttt{hwp}}&&\textbf{\texttt{hf}}&\textbf{\texttt{hwp}}&&\textbf{\texttt{hf}}&\textbf{\texttt{hwp}}&&\textbf{\texttt{hf}}&\textbf{\texttt{hwp}}\\
\midrule
3&&8&4&&11&8&&16&16&&23&33&&36&66\\
4&&16&8&&20&15&&29&31&&40&61&&61&123\\
5&&27&11&&34&22&&47&43&&65&86&&98&172\\
6&&44&19&&54&39&&74&78&&100&155&&149&310\\
7&&68&26&&82&52&&110&104&&144&207&&213&414\\
8&&102&33&&120&67&&159&134&&203&267&&295&534\\
9&&148&42&&171&84&&222&168&&279&335&&401&670\\
10&&207&42&&239&102&&303&205&&377&409&&539&819\\
11&&285&62&&325&123&&407&246&&497&492&&721&983\\
12&&381&73&&430&146&&533&292&&642&582&&965&1160\\
13&&494&86&&552&171&&685&341&&828&681&&1220&1370\\
14&&633&99&&714&197&&863&393&&1070&786&&1530&1570\\
15&&803&113&&884&225&&1070&450&&1330&899&&1910&1800\\
16&&1180&128&&1120&256&&1340&511&&1650&1020&&2330&2040\\
17&&1260&145&&1370&289&&1660&575&&1990&1150&&2820&2300\\
18&&1530&162&&1690&322&&2010&643&&2600&1280&&3330&2570\\
19&&1880&180&&2050&359&&2410&715&&2880&1430&&3930&2860\\
20&&2270&200&&2480&397&&2870&791&&3400&1580&&4630&3160\\
\bottomrule
\end{tabular}
\bigskip

\caption{Comparison of new algorithm to compute a single Hasse--Witt matrix (\texttt{hwp}) to \texttt{hypellfrob} (\texttt{hf}) for hyperelliptic curves over $\Fp$ of genus 3 to 20 with a rational Weierstrass point (times in CPU milliseconds).}\label{table:modp}
\end{table}

Table~\ref{table:modp} compares the performance of the $O(g^2p(\log p)^{1+o(1)})$ algorithm \textsc{ComputeHasseWittMatrix} to the $O(g^3p^{1/2}(\log p)^{2+o(1)})$ algorithm implemented by \texttt{hypellfrob} for computing a single Hasse--Witt matrix $W_p$ for a hyperelliptic curve of genus $g$.

While the performance data listed here focuses on running times, we should note that the new algorithm is also more space efficient than the average polynomial-time algorithm given in \cite{HS:HyperellipticHasseWitt}.
The improvement in space is not as dramatic as the improvement in time, but we typically gain a a small constant factor.
For example, the most memory intensive computation in Table~\ref{table:hw1vshw2g3} (genus 3 curves) occurs when $N=2^{26}$ and $w=0$ (no rational Weierstrass points); in this case the new algorithm (\texttt{hw2}) uses 11.4\ GB of memory, versus 22.4\ GB for the old algorithm (\texttt{hw1}).
In both cases the memory footprint can be reduced by increasing the number of subtrees used in the \textsc{RemainderForest} algorithm, as determined by the parameter $\kappa$ that appears in \S4.2 (the parameter $k$ in \cite[Table~3]{HS:HyperellipticHasseWitt}). Here we chose parameters that optimize the running time.

\section{Computing Sato--Tate distributions}

A notable application of our algorithm is the computation of Sato--Tate statistics.
Associated to each smooth projective curve $C/\Q$ of genus $g$ is the sequence of integer polynomials $L_p(T)$ at primes $p$ of good reduction that appear in the numerator of the zeta function in~\eqref{eq:zeta}.
It follows from the Weil conjectures that each normalized $L$-polynomial
\[
\overline{L}_p(T) = L_p(T/\sqrt{p})=\sum_{i=0}^{2g} a_iT^i
\]
is a real monic polynomial of degree $2g$ whose roots lie on the unit circle, with coefficients $a_i=a_{2g-i}$ that satisfy $|a_i|\le \binom{2g}{i}$.
We may then consider the distribution of the $a_i$ (jointly or individually) as $p$ varies over primes of good reduction up to a bound $N$, as $N\to\infty$.

In order to compute these Sato--Tate statistics we need to know the integer values of the coefficients of $L_p(T)$, not just their reductions modulo $p$.
As explained in~\cite{Harvey:ArithmeticSchemesPolytime,Harvey:HyperellipticPolytime}, the integer polynomial $L_p(T)$ can be computed in average polynomial time using a generalization of the method presented here.
However, for $g\le 3$ this can be more efficiently accomplished (for the feasible range of $N$) using group computations in the Jacobian of $C_p$ and its quadratic twist, as explained in \cite{KS:Lseries}.
For $g\le 2$ there are at most $5$ possible values for $L_p\in\Z[T]$ given its reduction modulo $p>13$, and the correct value can be determined in $O((\log p)^{2+o(1)})$ time, which is negligible.
For $g \le 3$ there are $O(p^{1/2})$ possible values, and the correct value can be determined in $O(p^{1/4}\M(\log p))$ time using a baby-steps giant-steps approach.
This time complexity is exponential in $\log p$ and asymptotically dominates the $O((\log p)^{4+o(1)})$ average time to compute $L_p(T)\bmod p$, but within the practical range of $N$ this is not a problem.
For example, when $N=2^{30}$ it takes approximately 344,000 CPU seconds to compute $L_p(T)\bmod p$ for all good $p\le N$ for a hyperelliptic curve of genus 3, while the time to lift $L_p(T)\bmod p$ to $\Z$ for all good $p\le N$ using the algorithm of \cite{KS:Lseries} is just 55,370 CPU seconds, far less than it would take to compute $L_p(T)$ via \cite{Harvey:ArithmeticSchemesPolytime,Harvey:HyperellipticPolytime}.

Figure \ref{fig:generic_a1} shows the distributions of the normalized $L$-polynomial coefficients $a_1,a_2$, and~$a_3$ over good primes $p\le 2^{30}$ for the curve
\[
y^2=x^7-x+1.
\]
It follows from a result of Zarhin \cite{Zarhin:LargeGaloisImage} that hyperelliptic curves of the form $y^2=x^{2g+1}-x+1$ over $\Q$ have large Galois image.
As a consequence, the Sato--Tate group of this curve, as defined in \cite{FKRS:SatoTate} or \cite{Serre:LecturesNxp}, is the unitary symplectic group $\USp(6)$.  Under the generalized Sato--Tate conjecture the distribution of normalized $L$-polynomials should match the distribution of characteristic polynomials of a random matrix in $\USp(6)$, under the Haar measure, and this indeed appears to be the case.

\begin{figure}[!htb]
\begin{center}
\includegraphics[scale=0.185]{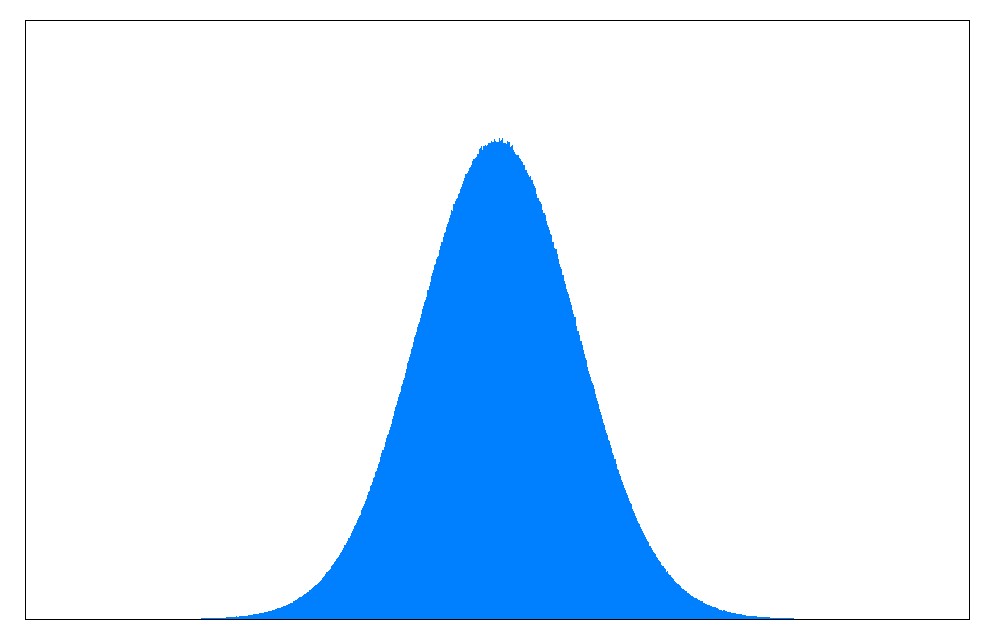}
\includegraphics[scale=0.185]{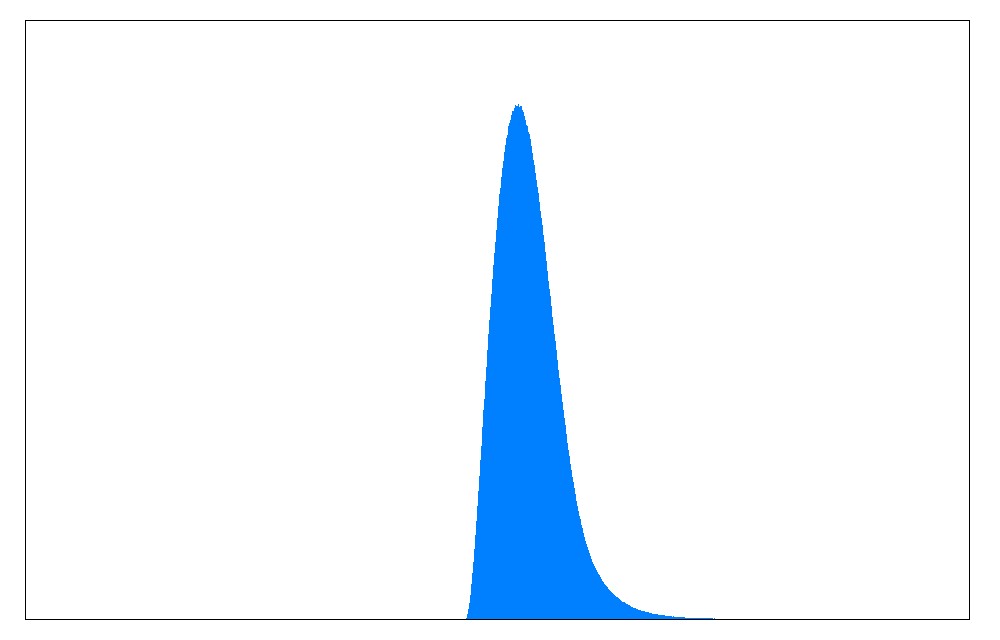}
\includegraphics[scale=0.185]{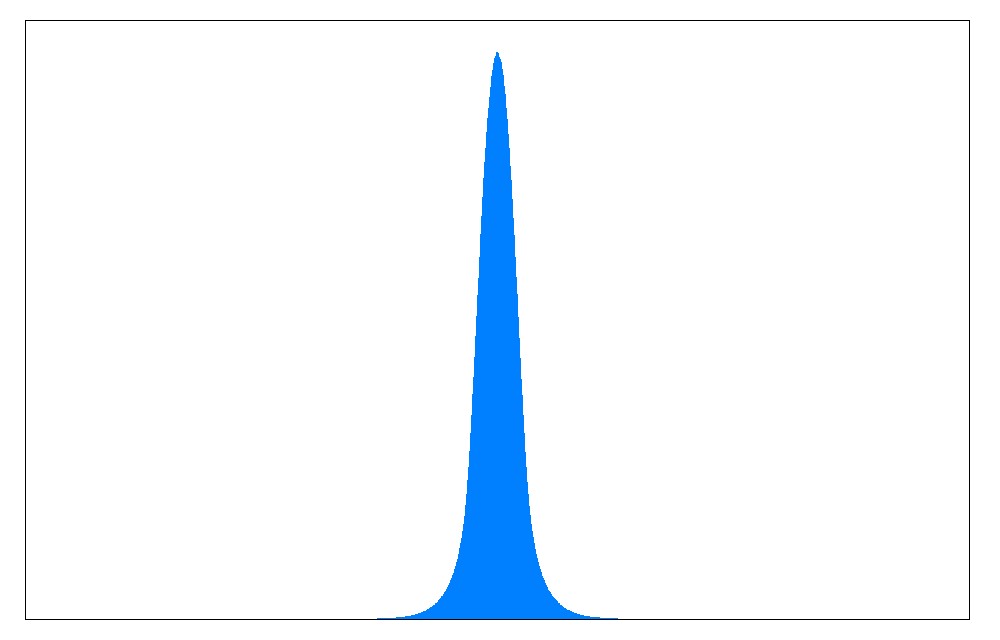}
\caption{Distributions of normalized $L$-polynomial coefficients $a_1,a_2,a_3$ for $y^2=x^7-x+1$ over primes $p\le 2^{30}$.}\label{fig:generic_a1}
\end{center}
\end{figure}

We also used our algorithm to compute Sato--Tate statistics for several other hyperelliptic curves of genus 3, including the curve
\[
y^2=x^7+3x^6+2x^5+6x^4+4x^3+12x^2+8x,
\]
which has an unusual Sato--Tate distribution as can be seen in Figure~\ref{fig:special_a1}.
This curve was found in a large search of genus 3 hyperelliptic curves with small coefficients.
This curve has a non-hyperelliptic involution $[x:y:z]\mapsto[z:y/4:x/2]$, which implies that its Jacobian has extra endomorphisms and its Sato--Tate group must be a proper subgroup of $\USp(6)$ (the exact group has yet to be determined).

\begin{figure}[!htb]
\begin{center}
\includegraphics[scale=0.185]{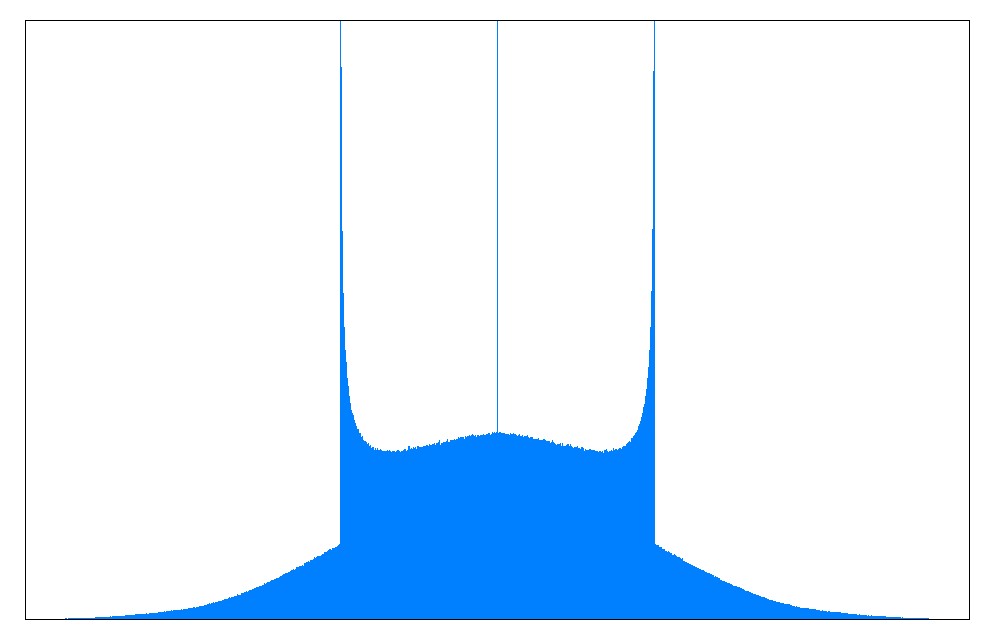}
\includegraphics[scale=0.185]{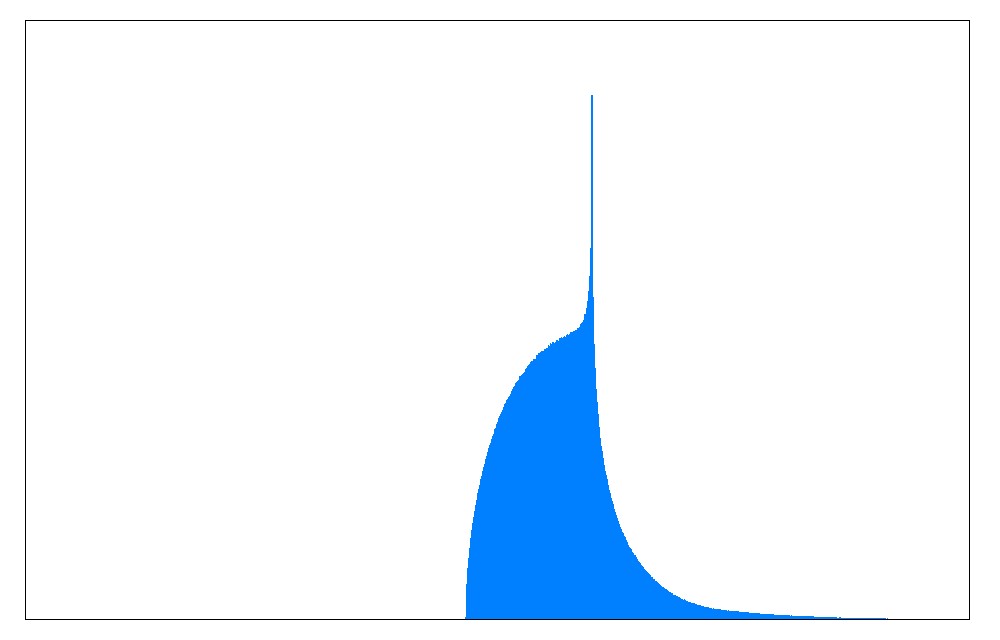}
\includegraphics[scale=0.185]{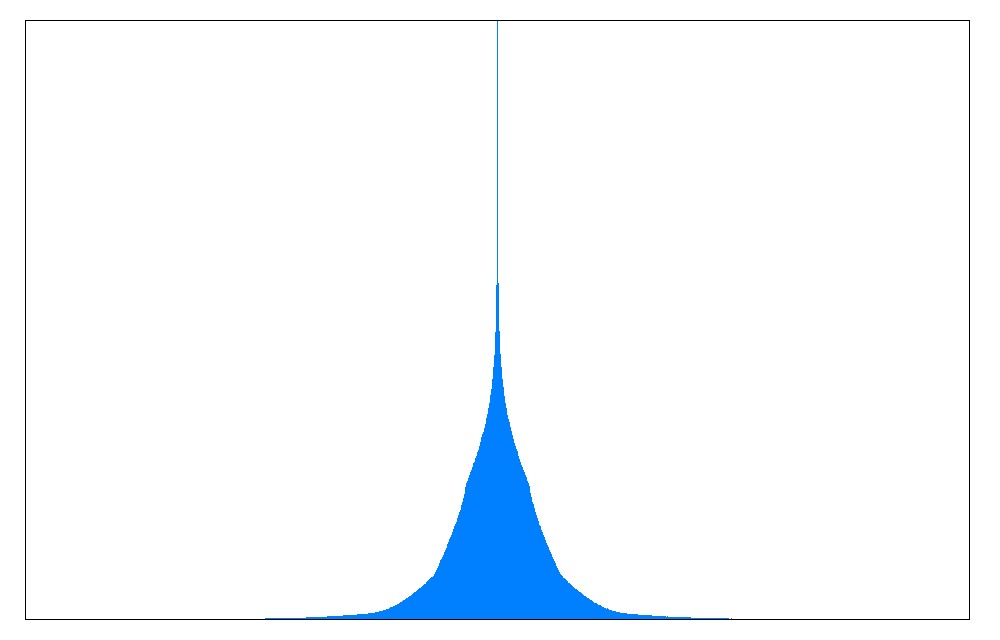}
\caption{Distributions of normalized $L$-polynomial coefficients $a_1, a_2, a_3$ for $y^2=x^7+3x^6+2x^5+6x^4+4x^3+12x^2+8x$ over good primes $p\le 2^{30}$.}\label{fig:special_a1}
\end{center}
\end{figure}

More examples can be found at \url{http://math.mit.edu/~drew}.

\bibliographystyle{amsplain}

\begin{thebibliography}{10}

\bibitem{BGS-recurrences}
Alin Bostan, Pierrick Gaudry, and {\'E}ric Schost, \emph{Linear recurrences
  with polynomial coefficients and application to integer factorization and
  {C}artier-{M}anin operator}, SIAM J. Comput. \textbf{36} (2007), no.~6,
  1777--1806. \MR{2299425 (2008a:11156)}

\bibitem{EF:Vandermonde}
A.~Eisinberg and G.~Fedele, \emph{On the inversion of the {V}andermonde
  matrix}, Appl. Math. Comput. \textbf{174} (2006), no.~2, 1384--1397.
  \MR{2220623}

\bibitem{FKRS:SatoTate}
Francesc Fit{\'e}, Kiran~S. Kedlaya, V{\'{\i}}ctor Rotger, and Andrew~V.
  Sutherland, \emph{Sato-{T}ate distributions and {G}alois endomorphism modules
  in genus $2$}, Compos. Math. \textbf{148} (2012), no.~5, 1390--1442.
  \MR{2982436}

\bibitem{Furer2009}
M.~F{\"u}rer, \emph{Faster integer multiplication}, SIAM J. Comput. \textbf{39}
  (2009), no.~3, 979--1005.

\bibitem{hypellfrob}
David Harvey, \texttt{\textbf{hypellfrob}} \emph{software library}, version 2.1.1
  available at
  \url{http://web.maths.unsw.edu.au/~davidharvey/code/hypellfrob/hypellfrob-2.1.1.tar.gz},
  2008.

\bibitem{Har-kedlaya}
\bysame, \emph{Kedlaya's algorithm in larger characteristic}, Int. Math. Res.
  Not. IMRN (2007), no.~22, Art. ID rnm095, 29. \MR{2376210 (2009d:11096)}

\bibitem{Harvey:ArithmeticSchemesPolytime}
\bysame, \emph{Computing zeta functions of arithmetic schemes}, preprint
  \url{http://arxiv.org/pdf/1402.3439.pdf}, 2014.

\bibitem{Harvey:HyperellipticPolytime}
\bysame, \emph{Counting points on hyperelliptic curves in average polynomial
  time}, Ann. of Math. (2) \textbf{179} (2014), no.~2, 783--803.

\bibitem{HLvdH-zmult}
David Harvey, Gr\'egoire Lecerf, and Joris van~der Hoeven, \emph{Even faster
  integer multiplication}, preprint \url{http://arxiv.org/abs/1407.3360}, 2014.

\bibitem{HS:HyperellipticHasseWitt}
David Harvey and Andrew~V. Sutherland, \emph{Computing {H}asse--{W}itt matrices
  of hyperelliptic curves in average polynomial time}, Algorithmic Number
  Theory Eleventh International Symposium (ANTS XI), vol.~17, London
  Mathematical Society Journal of Computation and Mathematics, 2014,
  pp.~257--273.

\bibitem{HvdH:zmatmult}
David Harvey and Joris van~der Hoeven, \emph{On the complexity of integer
  matrix multiplication}, preprint
  \url{http://hal.archives-ouvertes.fr/hal-01071191}, 2014.

\bibitem{kedlaya:algorithm}
Kiran~S. Kedlaya, \emph{Counting points on hyperelliptic curves using
  {M}onsky-{W}ashnitzer cohomology}, J. Ramanujan Math. Soc. \textbf{16}
  (2001), no.~4, 323--338. \MR{1877805 (2002m:14019)}

\bibitem{KS:Lseries}
Kiran~S. Kedlaya and Andrew~V. Sutherland, \emph{Computing {$L$}-series of
  hyperelliptic curves}, Algorithmic Number Theory Eighth International
  Symposium (ANTS VIII), Lecture Notes in Comput. Sci., vol. 5011, Springer,
  Berlin, 2008, pp.~312--326. \MR{2467855 (2010d:11070)}

\bibitem{Manin:HasseWittMatrix}
Ju.~I. Manin, \emph{The {H}asse-{W}itt matrix of an algebraic curve}, AMS
  Translations, Series 2 \textbf{45} (1965), 245--264, (originally published in
  {Izv. Akad. Nauk SSSR Ser. Mat.} \textbf{25} (1961) 153--172). \MR{0124324
  (23 \#A1638)}

\bibitem{Mordell:SqrtWilson}
L.~J. Mordell, \emph{The congruence {$(p-1/2)!\equiv \pm 1$} {$({\rm mod}$}
  {$p)$}}, Amer. Math. Monthly \textbf{68} (1961), 145--146. \MR{0123512 (23
  \#A837)}

\bibitem{Pila:Polytime}
J.~Pila, \emph{Frobenius maps of abelian varieties and finding roots of unity
  in finite fields}, Math. Comp. \textbf{55} (1990), no.~192, 745--763.
  \MR{1035941 (91a:11071)}

\bibitem{SS:IntegerMultiplication}
A.~Sch{\"o}nhage and V.~Strassen, \emph{Schnelle {M}ultiplikation grosser
  {Z}ahlen}, Computing (Arch. Elektron. Rechnen) \textbf{7} (1971), 281--292.
  \MR{0292344 (45 \#1431)}

\bibitem{Schoof:Polytime}
Ren{\'e} Schoof, \emph{Elliptic curves over finite fields and the computation
  of square roots mod {$p$}}, Math. Comp. \textbf{44} (1985), no.~170,
  483--494. \MR{777280 (86e:11122)}

\bibitem{Serre:LecturesNxp}
Jean-Pierre Serre, \emph{Lectures on {$N_X (p)$}}, Chapman \& Hall/CRC Research
  Notes in Mathematics, vol.~11, CRC Press, Boca Raton, FL, 2012. \MR{2920749}

\bibitem{Stichtenoth:FunctionFields}
Henning Stichtenoth, \emph{Algebraic function fields and codes}, Universitext,
  Springer-Verlag, Berlin, 1993. \MR{1251961 (94k:14016)}

\bibitem{smalljac}
Andrew~V. Sutherland, \texttt{\textbf{smalljac}} \emph{software library}, version 4.0.23
  available at \url{http://math.mit.edu/~drew/smalljac_v4.0.23.tar}, 2013.

\bibitem{vzGG-compalg}
Joachim von~zur Gathen and J{\"u}rgen Gerhard, \emph{Modern computer algebra},
  third ed., Cambridge University Press, Cambridge, 2013. \MR{3087522}

\bibitem{Yui:HasseWittMatrix}
Noriko Yui, \emph{On the {J}acobian varieties of hyperelliptic curves over
  fields of characteristic {$p>2$}}, J. Algebra \textbf{52} (1978), no.~2,
  378--410. \MR{0491717 (58 \#10920)}

\bibitem{Zarhin:LargeGaloisImage}
Yuri~G. Zarhin, \emph{Hyperelliptic {J}acobians without complex
  multiplication}, Math. Res. Lett. \textbf{7} (2000), no.~1, 123--132.
  \MR{1748293 (2001a:11097)}

\end{thebibliography}
\providecommand{\bysame}{\leavevmode\hbox to3em{\hrulefill}\thinspace}
\providecommand{\MR}{\relax\ifhmode\unskip\space\fi MR }
\providecommand{\MRhref}[2]{%
  \href{http://www.ams.org/mathscinet-getitem?mr=#1}{#2}
}
\providecommand{\href}[2]{#2}

\end{document}